\newtheorem{remark}{Remark}[section]
\newtheorem{lemma}{Lemma}[section]
\newtheorem{theorem}{Theorem}[section]
\newtheorem{proposition}{Proposition}[section]
\newtheorem{definition}{Defintion}[section]
\begin{document}
\begin{frontmatter}\pagestyle{fancy}\thispagestyle{empty}
\setcounter{page}{1}
\chead{1}
   \pagenumbering{arabic}

\title{Observability on lattice points for  heat equations and  applications \tnoteref{ff}}
\tnotetext[ff]{This work was partially supported  by the National Natural Science Foundation of China under grants 11771344 and
11701535.}

\author[mysecondaryaddress]{Ming Wang}
\ead{mwangcug@outlook.com}

\author[my3address]{Can Zhang}
\ead{zhangcansx@163.com}

\author[mymainaddress]{Liang Zhang\corref{mycorrespondingauthor}}
\cortext[mycorrespondingauthor]{Corresponding author}
\ead{thanleon@163.com}

\address[mysecondaryaddress]{School of Mathematics and Physics, China University of Geosciences, Wuhan 430074,  China}
\address[my3address]{School of Mathematics and Statistics, Wuhan University, Wuhan 430072, China}
\address[mymainaddress]{Department of Mathematics, Wuhan University of Technology, Wuhan 430070, China}


\begin{abstract}
Observability inequalities on lattice points are established for non-negative solutions of the heat equation with potentials in the whole space. As applications, some controllability results of heat equations are derived by the above-mentioned observability inequalities.
\end{abstract}

\begin{keyword}
Observability inequality \sep heat equation  \sep lattice points
\MSC[2010] 35K05 \sep 93B05 \sep 93B07

\end{keyword}

\end{frontmatter}


\section{Introduction}
\label{}
This is a continuous research  of \cite{Wang17, Wang18} on observability inequalities for the heat equation in $\mathbb{R}^d$ ($d\geq1$)
\begin{equation}\label{heat}
\begin{cases}
\partial_t u = \Delta u, \quad  (t,x)\in \mathbb{R}^+\times \mathbb{R}^d;\\
  u(0,x) = u_0(x)\in L^2(\mathbb{R}^d).
\end{cases}
\end{equation}
Recall that a measurable set $E\subset \mathbb{R}^d$ is called an \emph{observable set} if for every $t>0$, there exists a constant $C(d,t,E)>0$ so that when $u$ solves \eqref{heat},
$$
\int_{\mathbb{R}^d}|u(t,x)|^2\,\mathrm dx \leq C(d,t,E)\int_0^t\int_E |u(s,x)|^2\,\mathrm dx\,\mathrm ds.
$$
It was shown in \cite{Wang17} (see also \cite{EV18}) that, $E$ is an observable set if and only if $E$ is $\gamma$-thick at scale $L$ for some positive $\gamma,L$, namely,
$$
\left|E \bigcap (x+LQ)\right|\geq \gamma L^d\;\;\mbox{for each}\;\;x\in \mathbb{R}^d.
$$
Here $Q$ is a unit cube in $\mathbb{R}^d$. Clearly, for every $N>0$, $E_N:=\mathbb{Z}^d/N=\{n/N:  n\in \mathbb{Z}^d\}$ is of zero measure (in the sense of $d$-dimensional Lebesgue measure), and thus it is not an observable set.

It was also shown in \cite{Wang18} that, for every $\varepsilon\in (0,1)$ and $t>0$, there exists a large enough $N=N(t,\varepsilon)>0$ so that we can, up to an $\varepsilon$ error, recover the solution of \eqref{heat} at the time $t$ by observing the solution on the set $E_N$ at the same time. More precisely, it follows from Theorem 1.2 (i) of \cite{Wang18} that, for  every $(\varepsilon, t)\in (0,1)\times \mathbb{R}^+$, there exists a constant $C=C(d)>1$ so that, if $N\geq  \sqrt{\frac{1}{t}\ln \frac{C}{\varepsilon}}$, then each solution to \eqref{heat} satisfies:
\begin{align}\label{equ-ob-app-1}
 \int_{\mathbb{R}^d}|u(t,x)|^2\,\mathrm dx\leq  2N^{-d}\sum_{n\in \mathbb{Z}^d} \big|u(t,\frac{n}{N})\big|^2 +\varepsilon \int_{\mathbb{R}^d}|u_0(x)|^2\,\mathrm dx.
\end{align}

Then, the following two natural open questions are remained to study:
\begin{description}
  \item[(1)] In general, can we remove the $\varepsilon$-term on the right hand side of \eqref{equ-ob-app-1}?
  \item[(2)] If not, for what kind of initial data, the $\varepsilon$-term in  \eqref{equ-ob-app-1}  can be removed?
\end{description}

For the first question, since $E_N$ is not an observable set, it is natural to expect that $\varepsilon$-term can not be removed. Actually, we shall construct an explicit example to illustrate it.
For the second question,  we obtain some sufficient conditions, though it is too hard to give a complete
characteristic for such kind of initial data. In all, our answers to these two questions are summarized in the following theorem.
\begin{theorem}\label{thm-ob}
\begin{description}
  \item[(i)]  The $\varepsilon$-term in \eqref{equ-ob-app-1} can not be removed in general.
  \item[(ii)] Assume that $u_0\geq 0$ (or $\leq 0$). Then we have the following estimate for all solutions of \eqref{heat}
\begin{align}\label{equ-12-2}
\int_{\mathbb{R}^d}u^2(t,x)\,\mathrm dx \leq 36^d e^{\frac{2d}{t}}\sum_{n\in \mathbb{N}^d}u^2(t,n), \quad t>0.
\end{align}
\end{description}
\end{theorem}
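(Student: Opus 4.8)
The proof splits into two independent parts. For part (i), the plan is to exhibit a concrete non-negative (actually sign-changing, since we only need *some* solution) family of solutions for which the lattice sum on the right of (1.6) vanishes or is tiny while the $L^2$ norm stays bounded below. The natural candidate is built from the heat kernel: take $u_0$ to be a sum of Gaussians, or more cleverly, exploit the fact that $E_N=\mathbb Z^d/N$ has zero measure. Concretely, I would look for an initial datum whose solution at time $t$ is a trigonometric-type function vanishing precisely on $\frac1N\mathbb Z^d$; for instance in $d=1$ one can try to arrange $u(t,x)$ to behave like $\sin(\pi N x)$ times a Gaussian cutoff, by choosing $u_0$ as a modulated Gaussian $e^{i\pi N x}e^{-x^2/a}$ and taking real/imaginary parts. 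Since the heat semigroup at time $t$ acts on $e^{i\xi x}$ by $e^{-t\xi^2}$, one keeps the oscillation $e^{i\pi N x}$ at time $t$ while the envelope spreads, so $u(t,\cdot)$ still (approximately) vanishes on the lattice but has $L^2$ norm comparable to a fixed constant — forcing the $\varepsilon$-term. The delicate point is that the solution does not vanish *exactly* on the lattice, so I would instead fix $N$ and $t$ and show the ratio $\|u(t,\cdot)\|_{L^2}^2 / \big(N^{-d}\sum_n |u(t,n/N)|^2\big)$ can be made arbitrarily large by scaling, contradicting any $\varepsilon$-free inequality with a fixed constant.

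For part (ii) — the main content — the strategy is to bound the continuous $L^2$ norm at time $t$ by the discrete sum at time $t$, using positivity crucially. The key idea: if $u_0\ge 0$ then $u(t,x)=(e^{t\Delta}u_0)(x)>0$ everywhere and, more importantly, $u(t,\cdot)$ is smooth with controlled derivatives, so its values on the unit lattice $\mathbb N^d$ (or $\mathbb Z^d$) control its values everywhere via a comparison between the integral $\int_{\mathbb R^d} u^2$ and the Riemann-type sum $\sum_{n} u^2(t,n)$. The plan is: (a) write $u(t,x) = (G_{t/2}*v)(x)$ where $v=u(t/2,\cdot)\ge 0$ and $G_s$ is the Gaussian kernel; (b) use positivity of $v$ and the semigroup property to get a pointwise \emph{lower} bound on $u(t,n)$ for lattice points $n$ in terms of nearby mass of $v$, namely $u(t,n)\ge c\int_{n+Q} G_{t/2}(\,\cdot\,) $-weighted mass, exploiting that a non-negative function cannot be large in $L^2$ without depositing comparable mass near some lattice point; (c) convert the $L^2$ bound into an $L^1$-type bound — this is where $u_0\ge0$ enters decisively, since for non-negative functions $\|u(t,\cdot)\|_{L^2}^2 \le \|u(t,\cdot)\|_{L^\infty}\|u(t,\cdot)\|_{L^1}$ and both factors are controlled by the semigroup, whereas $\|u(t,\cdot)\|_{L^1}=\|u_0\|_{L^1}$ is conserved.

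More concretely, I expect the computation to run as follows. Using $u(t,x)=(e^{(t/2)\Delta}u(t/2,\cdot))(x)$ and the explicit Gaussian kernel $G_{t/2}(y)=(2\pi t)^{-d/2}e^{-|y|^2/(2t)}$, for any lattice point $n$ and any $y$ in the cube $n+[-\tfrac12,\tfrac12]^d$ one has $G_{t/2}(x-y)\ge e^{-d/t}G_{t/2}(x-n)$ after crude estimation (comparing $|x-y|^2$ and $|x-n|^2$), hence summing over $n$ and using $\sum_n \mathbf 1_{n+Q}\equiv 1$ gives $\sum_n u(t+\tfrac t2, n)$-type lower bounds; iterating the idea once more at time $t/2$ and combining with the $L^2\le L^\infty\cdot L^1$ trick and mass conservation $\|u(t/2,\cdot)\|_{L^1}=\|u(t/2,\cdot)\|_{L^\infty}^{?}$... — the bookkeeping of the constants $36^d$ and $e^{2d/t}$ is the fiddly part. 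The main obstacle, I expect, is organizing the comparison so that the \emph{same} time $t$ appears on both sides (the hypothesis from \cite{Wang18} naturally relates time $t$ on the left to the lattice at time $t$ on the right, but a purely self-contained argument wants to lose a factor like $e^{t\Delta/2}$ and then control the Gaussian sum $\sum_n G_{t/2}(x-n)$ uniformly in $x$ by something like $\max(1, (2\pi t)^{-d/2})\le C^d e^{d/t}$ via Poisson summation or crude tail bounds). Getting this uniform bound on the heat-kernel lattice sum, and tracking how positivity upgrades an $L^1$ estimate to the desired $L^2$ estimate without any $\varepsilon$, is the crux; the rest is careful but routine Gaussian estimation.
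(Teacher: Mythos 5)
Your proposal identifies the right high-level shape for part (ii) --- bound $u(t,x)$ pointwise by nearby lattice values and then integrate --- but two of the three steps you sketch would break down as stated, and the paper's actual argument is quite a bit simpler than what you are attempting.

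First, the $L^2 \le L^\infty\cdot L^1$ step (your step (c)) is a dead end here. Even granting optimal control $\|u(t,\cdot)\|_{L^\infty}\lesssim \sup_n u(t,n)$ and $\|u(t,\cdot)\|_{L^1}\lesssim \sum_n u(t,n)$, the product $\big(\sup_n u(t,n)\big)\big(\sum_n u(t,n)\big)$ is an \emph{upper} bound for $\sum_n u^2(t,n)$, not a lower bound, so you cannot reach the $\ell^2$ quantity on the right of \eqref{equ-12-2} this way; for a spread-out lattice sequence the product can be arbitrarily larger than $\sum_n u^2(t,n)$. The correct closing move is much more pedestrian: once you have a pointwise bound $u(t,x)\le C(t)\sum_{n\in Q_2(k)\cap\mathbb N^d}u(t,n)$ for $x\in Q_2(k)$, square it using $(\sum_{i\le m}a_i)^2\le m\sum a_i^2$ with $m=3^d$, integrate over the cube $Q_2(k)$, and sum over $k$, absorbing the bounded overlap into the constant. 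No $L^\infty\cdot L^1$ interpolation and no mass conservation are needed. Second, the kernel comparison you propose, $G_{t/2}(x-y)\ge e^{-d/t}G_{t/2}(x-n)$ for $y\in n+[-\tfrac12,\tfrac12]^d$ uniformly in $x$, is false: the ratio is $e^{(|x-n|^2-|x-y|^2)/(2t)}$, whose exponent grows linearly in $|x-n|$, so no fixed constant of the form $e^{-d/t}$ works when $x$ is far from $n$. The paper avoids this entirely by comparing, for each fixed $y$, the quantity $\sup_{x\in Q_2(k)} e^{-a|x-y|^2}$ to the \emph{lattice sum over the same cube} $\sum_{n\in Q_2(k)\cap\mathbb N^d}e^{-a|n-y|^2}$; the Gaussian's coordinatewise product structure reduces this to $d$ one-dimensional problems, and the key observation is that when $|y_i|\ge1$ the one-dimensional maximizer $x_i^*$ is already the lattice point $\pm1$, while for the (at most $d-1$) coordinates with $|y_i|<1$ one has the uniform lower bound $\sum_{k\in\{-1,0,1\}}e^{-a|k-y_i|^2}\ge\tfrac12 e^{-a/2}$. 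Also, no time-splitting at $t/2$ is used or needed: the paper compares the kernel $K(t,0,y)$ directly, which is why the constant $36^d e^{2d/t}$ stays bounded as $t\to\infty$, whereas your split introduces a spurious $(2\pi t)^{d/2}$ factor from the lower bound on $u(t,n)$ at large $t$.

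For part (i) your modulated-Gaussian idea is close but you hedge on the crucial point. You should not fall back to a perturbative/rescaling argument: the construction can be made exact. Fix the target profile $u(T,x)=\sin(N\pi x)f(x)$ with $\widehat f(\xi)=e^{-(T+1)\xi^2}$ and run the semigroup backwards by setting $\widehat{u_0}(\xi)=\frac{\pi}{i}e^{T\xi^2}\big(\widehat f(\xi-N\pi)-\widehat f(\xi+N\pi)\big)$. The choice of $\widehat f$ with decay $e^{-(T+1)\xi^2}$, strictly faster than the $e^{T\xi^2}$ you undo, is exactly what keeps $u_0\in L^2$; then $u(T,\cdot)$ vanishes identically on $\tfrac1N\mathbb Z$ while $\|u(T,\cdot)\|_{L^2}>0$, giving a clean contradiction with any $\varepsilon$-free inequality. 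The $d\ge2$ case is a product of one-dimensional factors.
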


Two remarks are given in order. First, the inequality \eqref{equ-12-2} also holds (with a different upper bound constant) if the integral points $\mathbb{N}^d$ is replaced by $E_N$, defined as before. Second, the proof of \eqref{equ-12-2} is essentially based on a careful analysis of the heat kernel $K(t,x,y)=(4\pi t)^{-d/2}e^{-\frac{|x-y|^2}{4t}}$. In particular, we only need a Gaussian type upper bound and a lower bound of the kernel.

As it is well known that for a large class of potentials $V(x)$, $\Delta+V(x)$ generates an analytic semigroup $e^{t(\Delta+V)}$ in $L^2(\mathbb{R}^d)$, and that the kernel of the semigroup $e^{t(\Delta+V)}$ satisfies a two-side Gaussian type estimate. Thus, it is natural to extend the estimate in (ii) of Theorem \ref{thm-ob} to heat equations with potentials.

To this end, we consider the heat equation with a potential
\begin{equation}\label{heat-p}
\begin{cases}
\partial_t u =(\Delta+V(x)) u, \quad  (t,x)\in \mathbb{R}^+\times \mathbb{R}^d;\\
     u(0,x) = u_0(x)\in L^2(\mathbb{R}^d).
\end{cases}
\end{equation}
Here $V: \mathbb{R}^d\rightarrow \mathbb{R}$ depends only on the spatial variable. To state our result, we need the uniformly local Lebesgue integral spaces $L^p_{U, loc}(\mathbb{R}^d), p\geq 1,$ which are Banach spaces endowed with norms
$$
\|f\|_{L^p_{U, loc}(\mathbb{R}^d)}:= \sup_{x\in \mathbb{R}^d} \left( \int_{|x-y|\leq 1} |f(y)|^p\,\mathrm dy \right)^{\frac{1}{p}}.
$$
Clearly, the usual Lebesgue space $L^p(\mathbb{R}^d)$ is continuous embedding into $L^p_{U, loc}(\mathbb{R}^d)$.
\begin{theorem}\label{thm-ob-p}
Let $V$ be a real-valued function belonging to $L^p_{U, loc}(\mathbb{R}^d)$ with $p> \max\{1, \frac{d}{2}\}$. Assume that $u_0\geq 0$ (or $\leq 0$). Then there exists a constant $C=C(d,V)>0$ so that the following estimate hold for all solutions of \eqref{heat-p}
$$
\int_{\mathbb{R}^d}u^2(t,x)\,\mathrm dx \leq  e^{C(1+t+\frac{1}{t})}\sum_{n\in \mathbb{N}^d}u^2(t,n), \quad t>0.
$$
\end{theorem}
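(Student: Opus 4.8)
The plan is to carry over the heat‑kernel analysis behind part~(ii) of Theorem~\ref{thm-ob}, with the Gaussian kernel $K$ replaced by the kernel $p_V(t,x,y)$ of the semigroup $e^{t(\Delta+V)}$. The starting point is a structural fact: the hypothesis $V\in L^p_{U,loc}(\mathbb R^d)$ with $p>\max\{1,d/2\}$ forces $V$ (indeed both $V_+$ and $V_-$) into the Kato class uniformly in $x$, so $\Delta+V$ generates a positivity preserving $C_0$‑semigroup on $L^2(\mathbb R^d)$ whose kernel is jointly continuous, strictly positive, and obeys two–sided Gaussian bounds: there is $\kappa=\kappa(d,V)\ge 1$ with
\begin{equation*}
\kappa^{-1}t^{-d/2}e^{-\kappa t}e^{-\kappa|x-y|^2/t}\le p_V(t,x,y)\le \kappa\,t^{-d/2}e^{\kappa t}e^{-|x-y|^2/(\kappa t)},\qquad t>0,\ x,y\in\mathbb R^d .
\end{equation*}
This is classical for Schr\"odinger semigroups (Feynman--Kac, together with the Kato‑class Gaussian estimates); in fact the lower bound will only be used on the near–diagonal region $|x-y|\le\sqrt d$. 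Since $u_0\ge 0$ and $p_V\ge 0$, the solution $u(t,\cdot)=\int_{\mathbb R^d}p_V(t,\cdot,y)u_0(y)\,\mathrm dy$ is nonnegative, and it is exactly this sign information that drives the estimate.

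Next I would decompose $\mathbb R^d=\bigsqcup_m Q_m$ into the unit cubes $Q_m=m+[0,1)^d$ and, for each $m$, pick a lattice point $n_m$ that is a vertex of $Q_m$, so that $|n_m-y|\le\sqrt d$ for $y\in Q_m$ and $m\mapsto n_m$ is injective. The key local estimate is obtained from the lower kernel bound: for $y\in Q_m$ one has $p_V(t,n_m,y)\ge\beta(t)^{-1}$ with $\beta(t):=\kappa\,t^{d/2}e^{\kappa t}e^{\kappa d/t}$, whence, using $u_0\ge 0$,
\begin{equation*}
\int_{Q_m}u_0(y)\,\mathrm dy\le \beta(t)\int_{Q_m}p_V(t,n_m,y)u_0(y)\,\mathrm dy\le \beta(t)\,u(t,n_m).
\end{equation*}
Thus the $L^1$–mass of $u_0$ over each unit cube is controlled by the value of $u(t,\cdot)$ at a single nearby lattice point. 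Fixing $x\in Q_j$, writing $u(t,x)=\sum_m\int_{Q_m}p_V(t,x,y)u_0(y)\,\mathrm dy$, bounding $p_V(t,x,y)\le\kappa t^{-d/2}e^{\kappa t}e^{-\mathrm{dist}(Q_j,Q_m)^2/(\kappa t)}$ for $y\in Q_m$, and inserting the previous display gives
\begin{equation*}
u(t,x)\le \kappa^2 e^{2\kappa t}e^{\kappa d/t}\sum_m e^{-\mathrm{dist}(Q_j,Q_m)^2/(\kappa t)}\,u(t,n_m).
\end{equation*}

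Finally I would square this, apply Cauchy--Schwarz with the weights $e^{-\mathrm{dist}(Q_j,Q_m)^2/(\kappa t)}$, integrate over $x\in Q_j$, sum over $j$, and interchange the two summations. The separable sum $\sum_m e^{-\mathrm{dist}(Q_j,Q_m)^2/(\kappa t)}$ equals $S(\kappa t)^d$ with $S(\tau)=3+2\sum_{r\ge1}e^{-r^2/\tau}\le 3+\sqrt{\pi\tau}$ and occurs twice, and the injectivity of $m\mapsto n_m$ turns $\sum_m u(t,n_m)^2$ into at most $\sum_n u^2(t,n)$, yielding
\begin{equation*}
\int_{\mathbb R^d}u^2(t,x)\,\mathrm dx\le \kappa^4 e^{4\kappa t}e^{2\kappa d/t}\bigl(3+\sqrt{\pi\kappa t}\,\bigr)^{2d}\sum_n u^2(t,n).
\end{equation*}
Since $(3+\sqrt{\pi\kappa t})^{2d}\le C_1(1+t)^{2d}\le C_1 e^{2dt}$, the prefactor is at most $e^{C(1+t+1/t)}$ for a suitable $C=C(d,V)$, which is the claim. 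I expect the only real obstacle to be the first step, namely citing (or giving a short proof for) positivity preservation together with the two–sided Gaussian bounds — in particular the near–diagonal lower bound — for the full class $L^p_{U,loc}$ with sign–changing $V$; once this analytic input is in hand, the remainder is the cube‑decomposition bookkeeping above, which is entirely parallel to the proof of \eqref{equ-12-2}.
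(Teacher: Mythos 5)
Your argument is correct, and it takes a genuinely different route from the paper's, although the entry point is the same: both you and the paper first pass from the hypothesis $V\in L^p_{U,\mathrm{loc}}(\mathbb R^d)$, $p>\max\{1,d/2\}$, to membership of $V$ in the Kato class and thence to two-sided Gaussian bounds on the kernel of $e^{t(\Delta+V)}$ (the paper cites \cite[Prop.\ 2.1]{Zheng} and \cite[Thm.\ 7.1]{Simon} for this step; you should do the same rather than rely on an informal appeal to Feynman--Kac). From there the strategies diverge. The paper packages the conclusion as a separate Theorem (\ref{thm-2}) for any potential with two-sided Gaussian bounds, proved by sandwiching $e^{t(\Delta+V)}$ between free heat propagators $e^{\frac{c_4}{4}t\Delta}$ and $e^{\frac{c_2}{4}t\Delta}$, invoking the already-proved free estimate \eqref{equ-12-2}, and then — because the two Gaussian speeds $c_2\le c_4$ do not match — performing a time rescaling $t\mapsto c_2t/c_4$ and using the $L^2$-boundedness of the semigroup to return to time $t$. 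You instead run the cube decomposition directly on $p_V$: the near-diagonal lower bound converts $\int_{Q_m}u_0$ into the single lattice value $u(t,n_m)$, and the Gaussian upper bound together with Cauchy--Schwarz in the off-diagonal weights $e^{-\mathrm{dist}(Q_j,Q_m)^2/(\kappa t)}$ assembles the $L^2$ estimate, the separable weight sum $S(\kappa t)^d$ being controlled by $e^{Ct}$. This avoids both the reduction to the free equation and the time-rescaling bookkeeping, but the pointwise step is no longer local in the sense of Lemma~\ref{lem-around}: every cube contributes, and the Gaussian tail replaces the exact kernel comparison the paper uses for the free case, so it is not quite "entirely parallel" to the proof of \eqref{equ-12-2} as you suggest. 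One notational caveat: the paper writes $\mathbb N^d$ for the observation lattice, but the surrounding lemmas (e.g. the $3^d$ count in $Q_2(k)\cap\mathbb N^d$ and the covering identity $\int_{\mathbb R^d}=2^{-d}\sum_{k\in\mathbb N^d}\int_{Q_2(k)}$) show that $\mathbb Z^d$ is meant, so your use of all of $\mathbb Z^d$ is consistent with the intended statement.
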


\medskip

The paper is organized as follows. In Section 2, we prove Theorem \ref{thm-ob} and Theorem \ref{thm-ob-p}.
In Section 3, we give some applications of the observability inequality in (ii) of Theorem \ref{thm-ob} in Control Theory.

\section{Proofs of main results}
\setcounter{equation}{0}
In the sequel, for every $x\in \mathbb{R}^d$ and $r>0$, we use $Q_r(x)$ to denote the closed cube in $\mathbb{R}^d$ centered at $x$ with side length $r$; We denote by $A^c$ the complement set of $A$.

\begin{lemma}\label{lem-out}
For any $a>0$ and $y\in Q_4^c(0)$, we have
\begin{align}\label{equ-717-2}
\sup_{x\in Q_2(0)} e^{-a|x-y|^2}\leq  2^{(d-1)}e^{\frac{(d-1)a}{2}}\sum_{n\in Q_2(0)\bigcap \mathbb{N}^d} e^{-a|n-y|^2}.
\end{align}
\end{lemma}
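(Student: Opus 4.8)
The plan is to reduce the $d$-dimensional estimate to a one-dimensional one in each coordinate, exploiting that the Gaussian is a tensor product. Writing $x=(x_1,\dots,x_d)$, $y=(y_1,\dots,y_d)$ and recalling $Q_2(0)=[-1,1]^d$, one has $e^{-a|x-y|^2}=\prod_{i=1}^d e^{-a|x_i-y_i|^2}$; hence the supremum on the left of \eqref{equ-717-2} factors as $\prod_{i=1}^d\sup_{|x_i|\le1}e^{-a|x_i-y_i|^2}$, and, since $Q_2(0)\cap\mathbb N^d=([-1,1]\cap\mathbb N)^d$, the sum on the right factors as $\prod_{i=1}^d\big(\sum_{m\in[-1,1]\cap\mathbb N}e^{-a|m-y_i|^2}\big)$. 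So it is enough to exhibit, for each $i$, a constant $c_i>0$ with $\sup_{|x_i|\le1}e^{-a|x_i-y_i|^2}\le c_i\sum_{m\in[-1,1]\cap\mathbb N}e^{-a|m-y_i|^2}$ and with $\prod_{i=1}^d c_i\le 2^{d-1}e^{(d-1)a/2}$.

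For a single coordinate I would compute $\sup_{|x_i|\le1}e^{-a|x_i-y_i|^2}=e^{-a\,d(y_i)^2}$, where $d(s):=\mathrm{dist}(s,[-1,1])=(|s|-1)_+$, and bound the sum from below by its largest term, $\sum_{m\in[-1,1]\cap\mathbb N}e^{-a|m-y_i|^2}\ge e^{-a\,\delta(y_i)^2}$ with $\delta(s):=\mathrm{dist}\big(s,[-1,1]\cap\mathbb N\big)$. The per-coordinate inequality then collapses to the scalar statement $a\big(\delta(y_i)^2-d(y_i)^2\big)\le\log c_i$, and the whole lemma follows once I control the ``continuum-to-lattice loss'' $\delta(s)^2-d(s)^2$.

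The crux is the following elementary bound on that loss. When $|s|\ge1$ the point of $[-1,1]$ nearest $s$ is an endpoint $\pm1$, which is itself a lattice point, so $\delta(s)=d(s)$ and the loss is $0$; when $|s|\le1$ one has $d(s)=0$ while $\delta(s)\le\tfrac12$ (every point of $[-1,1]$ lies within $\tfrac12$ of a point of $[-1,1]\cap\mathbb N$), so the loss is at most $\tfrac14$. Thus $\delta(s)^2-d(s)^2\le\tfrac14$ for all $s$, and it vanishes on $|s|\ge1$. I would then invoke the hypothesis $y\in Q_4^c(0)$: it supplies an index $j$ with $|y_j|>2>1$, so the loss in coordinate $j$ is $0$ and I may take $c_j=1$; for the remaining $d-1$ indices take $c_i=e^{a/4}$. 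Multiplying the $d$ per-coordinate inequalities gives $\prod_{i=1}^d c_i=e^{(d-1)a/4}\le 2^{d-1}e^{(d-1)a/2}$, which is exactly the claimed \eqref{equ-717-2}.

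I expect the only subtle point to be isolating this loss estimate and recognizing the role of the hypothesis: in dimension $d=1$ the prefactor $2^{d-1}e^{(d-1)a/2}$ equals $1$, so no loss at all can be tolerated, and it is precisely $|y|>2$ (hence $|y|>1$) that makes the lattice endpoint $\pm1$ of $[-1,1]$ coincide with the nearest point of the interval; in higher dimensions the same ``free'' coordinate merely leaves room to spare, and the constant $2^{d-1}$ in the statement is visibly not sharp. Everything else — the one-dimensional Gaussian supremum, the factorization of sup and sum over coordinates, and the bookkeeping of the distinguished index — is routine.
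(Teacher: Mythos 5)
Your argument is correct and follows essentially the same route as the paper's: factor the Gaussian over coordinates, split them into those with $|y_i|\ge1$ (where the maximizer $\pm1$ is itself a lattice point, so no loss) and those with $|y_i|<1$, and use $y\in Q_4^c(0)$ to guarantee at least one coordinate of the first type, so at most $d-1$ coordinates incur any loss. Your handling of the lossy coordinates is a touch cleaner and sharper: you retain only the nearest lattice term and invoke $\mathrm{dist}(y_i,\{-1,0,1\})\le\tfrac12$, giving $e^{(d-1)a/4}$, whereas the paper keeps two terms and balances them by the AM--GM inequality, arriving at the weaker (but sufficient) constant $2^{d-1}e^{(d-1)a/2}$.
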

\begin{proof}
In the case that $d=1$, the inequality \eqref{equ-717-2} holds obviously.  We next assume that $d\geq 2$.  Arbitrarily give $y\in Q_4^c(0)$. Since $e^{-a|x-y|^2}$ is a continuous function of $x$ in $Q_2(0)$, the maximum of $e^{-a|x-y|^2}$ can be obtained at some point $x^*=(x_1^*,x_2^*, \cdots, x_d^*)$. Note that
\begin{align*}
\max_{x\in Q_2(0)}e^{-a|x-y|^2} &= \max_{x\in Q_2(0)}\prod_{i=1}^de^{-a|x_i-y_i|^2}\\
 &=\prod_{i=1}^d\max_{-1\leq x_i\leq 1}e^{-a|x_i-y_i|^2}=\prod_{i=1}^de^{-a|x_i^*-y_i|^2},
\end{align*}
where $x_i^*$ takes the form
\begin{equation}\label{equ-717-4}
x_i^*=
\begin{cases}
1, \quad y_i\geq 1;\\
y_i, \quad -1<y_i<1;\\
-1, \quad y_i\leq -1.
\end{cases}
\end{equation}
We write $y=(y_1,y_2,\cdots,y_d)$ and divide the set $\{y_i, i=1,2,\cdots,d\}$ into two groups: $|y_i|\geq 1$ and $|y_i|<1$.
Since $y\in Q_4^c(0)$, there exists at least one of $\{y_i, i=1,2,\cdots,d\}$ such that $|y_i|\geq 2$.  Thus there are at most $(d-1)$ elements of $\{y_i, i=1,2,\cdots,d\}$ satisfying $|y_i|<1$.

Without loss of generality we can assume that for some $j\leq d-1$
\begin{equation}\label{equ-717-5}
\begin{cases}
|y_{i}|\geq 1, \quad  j+1\leq i\leq d\\
-1<y_i<1, \quad i\leq j.
\end{cases}
\end{equation}
Then it follows from \eqref{equ-717-4} and \eqref{equ-717-5} that
\begin{equation}\label{equ-717-6}
x_i^*=
\begin{cases}
1\textmd{ or }-1, \quad j+1\leq i\leq d,\\
y_i, \quad i\leq j.
\end{cases}
\end{equation}
Then we have
\begin{equation}\label{equ-717-7}
\sup_{x\in Q_2(0)}e^{-a|x-y|^2}=e^{-a|x^*-y|^2}=\prod_{j+1\leq i\leq d}e^{-a|x_i^*-y_i|^2}.
\end{equation}

On the other hand, using \eqref{equ-717-6}, we have
\begin{multline}\label{equ-717-8}
\sum_{\substack{n=(n_1,n_2,\cdots,n_d)\\n_i\in \{-1,0,1\}, i=1,2,\cdots, d}} e^{-a|n-y|^2}\\
\geq \prod_{j+1\leq i\leq d}e^{-a|x_i^*-y_i|^2} \sum_{\substack{n_i\in \{-1,0,1\},\\ i=1,2,\cdots, j}} \prod_{i\leq j}e^{-a|n_i-y_i|^2}.
\end{multline}
This is because every term on the right hand side of \eqref{equ-717-8} appears on the left, and every term on the left is non-negative.
Combining \eqref{equ-717-7} and \eqref{equ-717-8}, we find that
\begin{align*}
\sum_{n\in Q_2(0)\bigcap \mathbb{N}^d} e^{-a|n-y|^2} &= \sum_{\substack{n=(n_1,n_2,\cdots,n_d)\\n_i\in \{-1,0,1\}, i=1,2,\cdots, d}} e^{-a|n-y|^2}\geq \Theta \sup_{x\in Q_2(0)}e^{-a|x-y|^2},
\end{align*}
where
$$
\Theta= \sum_{\substack{n_i\in \{-1,0,1\},\\ i=1,2,\cdots, j\leq d-1}} \prod_{i\leq j\leq d-1}e^{-a|n_i-y_i|^2}.
$$
Thus \eqref{equ-717-2} holds if one can show  that $2^{(d-1)}e^{\frac{(d-1)a}{2}}\Theta \geq 1.$

In fact, if we write
\begin{equation}\label{equ-717-10}
\sum_{\substack{n_i\in \{-1,0,1\},\\ i=1,2,\cdots, j}} \prod_{i\leq j}e^{-a|n_i-y_i|^2} = \prod_{i\leq j}A_i,  
\end{equation}
with 
$$
A_i= \sum_{k \in \{-1,0,1\}} e^{-a|k-y_i|^2}, \quad -1<y_i<1,
$$
then for every $1\leq i\leq j$, by the definition of $A_i$ we have
\begin{equation}\label{equ-717-12}
A_i\geq
\begin{cases}
\sum_{k \in \{0,1\}} e^{-a|k-y_i|^2}, \quad 0\leq y_i<1;\\
\sum_{k \in \{-1,0\}} e^{-a|k-y_i|^2}, \quad -1<y_i<0.
\end{cases}
\end{equation}
Thanks to \eqref{equ-717-12}, for $0\leq y_i<1$ we have
\begin{align*}
A_i&\geq  e^{-a|1-y_i|^2}+ e^{-ay_i^2} \geq \frac{1}{2}e^{-\frac{a}{2}(|1-y_i|^2+y_i^2)}\\
&=\frac{1}{2}e^{a(y_i-y_i^2-\frac{1}{2})}\geq \frac{1}{2}e^{-\frac{a}{2}}.
\end{align*}
Similarly, for $-1< y_i\leq 0$ we also have
$$
A_i\geq  e^{-a|-1-y_i|^2}+ e^{-ay_i^2}\geq \frac{1}{2}e^{-\frac{a}{2}}.
$$
Thus, we always have
\begin{equation}\label{equ-717-13}
A_i\geq \frac{1}{2}e^{-\frac{a}{2}}, \quad \quad i\leq j.
\end{equation}
It follows from \eqref{equ-717-10} and \eqref{equ-717-13} that
$$
\sum_{\substack{n_i\in \{-1,0,1\}, \\
i=1,2,\cdots, j\leq d-1}} \prod_{i\leq j\leq d-1}e^{-a|n_i-y_i|^2}\geq 2^{-j}e^{-\frac{aj}{2}}\geq 2^{-(d-1)}e^{-\frac{(d-1)a}{2}}.
$$
Thus   $2^{(d-1)}e^{\frac{(d-1)a}{2}}\Theta \geq 1.$  This completes the proof.
\end{proof}

\begin{lemma}\label{lem-inside}
For any $a>0$ and $y\in Q_4(0)$, we have
$$
\sup_{x\in Q_2(0)} e^{-a|x-y|^2}\leq e^{4ad}\sum_{n\in Q_2(0)\bigcap \mathbb{N}^d} e^{-a|n-y|^2}.
$$
\end{lemma}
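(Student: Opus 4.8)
The plan is to run the same separation-of-variables argument as in the proof of Lemma~\ref{lem-out}, but now the point $y$ lies in $Q_4(0)=[-2,2]^d$, so no coordinate of $y$ is forced to be far from $[-1,1]$; consequently the estimate is much easier and there is no genuine obstacle, only bookkeeping. Write $y=(y_1,\dots,y_d)$ with each $y_i\in[-2,2]$, recall from the proof of Lemma~\ref{lem-out} that $Q_2(0)\cap\mathbb{N}^d=\{-1,0,1\}^d$, and use the product structure
\[
\sup_{x\in Q_2(0)}e^{-a|x-y|^2}=\prod_{i=1}^d\Big(\sup_{-1\le x_i\le1}e^{-a|x_i-y_i|^2}\Big)=\prod_{i=1}^d e^{-a\delta_i^2},\qquad \delta_i:=\mathrm{dist}(y_i,[-1,1]),
\]
where $\delta_i\in[0,1]$ because $|y_i|\le 2$.

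Next I would choose, for each coordinate $i$, a good lattice point $n_i\in\{-1,0,1\}$ and compare term by term. Take $n_i$ to be an integer nearest to $y_i$: if $|y_i|\le 1$ then $\delta_i=0$ and $|n_i-y_i|\le\tfrac12$; if $y_i>1$ then $n_i=1$ and $|n_i-y_i|=y_i-1=\delta_i$; if $y_i<-1$ then $n_i=-1$ and $|n_i-y_i|=\delta_i$. In all three cases $|n_i-y_i|^2\le \delta_i^2+4$ (in fact $|n_i-y_i|^2\le\delta_i^2+\tfrac14$, but the crude bound is all that is needed). Multiplying over $i$ gives
\[
\prod_{i=1}^d e^{-a|n_i-y_i|^2}\ \ge\ e^{-4ad}\prod_{i=1}^d e^{-a\delta_i^2}\ =\ e^{-4ad}\sup_{x\in Q_2(0)}e^{-a|x-y|^2}.
\]

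Finally, since $(n_1,\dots,n_d)\in\{-1,0,1\}^d$ is one of the lattice points being summed over and all summands are non-negative, expanding $|n-y|^2=\sum_i|n_i-y_i|^2$ and factoring the grid sum yields
\[
\sum_{n\in Q_2(0)\cap\mathbb{N}^d}e^{-a|n-y|^2}=\prod_{i=1}^d\Big(\sum_{k\in\{-1,0,1\}}e^{-a|k-y_i|^2}\Big)\ \ge\ \prod_{i=1}^d e^{-a|n_i-y_i|^2}.
\]
Combining the last two displays gives $\sup_{x\in Q_2(0)}e^{-a|x-y|^2}\le e^{4ad}\sum_{n\in Q_2(0)\cap\mathbb{N}^d}e^{-a|n-y|^2}$, which is the claim. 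The only step requiring a moment's care is the per-coordinate inequality $|n_i-y_i|^2\le\delta_i^2+4$, i.e.\ verifying the three cases $|y_i|\le1$, $y_i>1$, $y_i<-1$ separately; everything else is the same product bookkeeping already used for Lemma~\ref{lem-out}. (Alternatively, one can argue even more cheaply: $\sup_{x\in Q_2(0)}e^{-a|x-y|^2}\le 1$, while choosing $n_i^0=\mathrm{sgn}(y_i)$ gives $|n^0-y|^2\le d$ and hence $\sum_{n}e^{-a|n-y|^2}\ge e^{-ad}$, so that $e^{4ad}\sum_n e^{-a|n-y|^2}\ge e^{3ad}\ge 1$; I prefer the coordinate-wise version since it mirrors Lemma~\ref{lem-out} and directly produces the stated constant.)
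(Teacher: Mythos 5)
Your argument is correct, but it takes a more elaborate route than the paper's. The paper simply observes that $\sup_{x\in Q_2(0)} e^{-a|x-y|^2}\le 1$, so it suffices to bound the sum from below by a single term: taking $n=0$ and using $|y|\le 2\sqrt d$ (each coordinate satisfies $|y_i|\le 2$) yields $\sum_{n\in Q_2(0)\cap\mathbb{N}^d} e^{-a|n-y|^2}\ge e^{-a|y|^2}\ge e^{-4ad}$, and the lemma follows immediately. Your main proof instead rebuilds the coordinate-by-coordinate comparison from Lemma~\ref{lem-out}: choosing $n_i\in\{-1,0,1\}$ nearest to $y_i$ and checking $|n_i-y_i|^2\le \delta_i^2+4$ in the three cases is correct (indeed $|n_i-y_i|^2\le\delta_i^2+\tfrac14$ holds, which would give the sharper constant $e^{ad/4}$). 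Your parenthetical ``cheaper'' variant — $\sup\le 1$ together with the single lattice point $n^0=\operatorname{sgn}(y)$ — is essentially the paper's proof with a better-chosen representative point. So: the paper's proof is shorter and does not separate variables at all; your version is a little longer but mirrors Lemma~\ref{lem-out} and, if one cared, would tighten the constant. Both are valid.
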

\begin{proof}
Arbitrarily fix $y\in Q_4(0)$.  Since $\sup_{x\in Q_2(0)} e^{-a|x-y|^2}\leq 1$, it suffices to show that
\begin{equation}\label{equ-717-13.5}
1\leq e^{4ad}\sum_{n\in Q_2(0)\bigcap \mathbb{N}^d} e^{-a|n-y|^2}.
\end{equation}
Since $|y|\leq 2\sqrt{d}$, we have $\sum_{n\in Q_2(0)\bigcap \mathbb{N}^d} e^{-a|n-y|^2} \geq e^{-a|y|^2} \geq e^{-4ad}.$
This gives \eqref{equ-717-13.5} and finishes the proof.
\end{proof}


\begin{lemma}\label{lem-around}
For any $a>0$ and $x,y\in \mathbb{R}^d$, we have
$$
\sup_{z\in Q_2(x)} e^{-a|z-y|^2}\leq  2^{d-1}e^{4ad}\sum_{n\in Q_2(x)\bigcap (\mathbb{N}^d+x)} e^{-a|n-y|^2}.
$$
\end{lemma}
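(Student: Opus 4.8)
The plan is to reduce Lemma \ref{lem-around} to the previous two lemmas by exploiting the translation invariance of the quantity $|z-y|^2$, and then to combine the two cases $y\in Q_4(x)$ and $y\in Q_4^c(x)$ under the single constant $2^{d-1}e^{4ad}$.

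First I would perform the change of variables. Fix $x,y\in\mathbb{R}^d$ and set $\tilde z = z-x$, $\tilde y = y-x$, $\tilde n = n-x$. As $z$ ranges over $Q_2(x)$, $\tilde z$ ranges over $Q_2(0)$, and as $n$ ranges over $Q_2(x)\cap(\mathbb{N}^d+x)$, the point $\tilde n$ ranges over $Q_2(0)\cap\mathbb{N}^d$. Since $|z-y|=|\tilde z-\tilde y|$ and $|n-y|=|\tilde n-\tilde y|$, the claimed inequality is equivalent to
$$
\sup_{\tilde z\in Q_2(0)} e^{-a|\tilde z-\tilde y|^2}\leq 2^{d-1}e^{4ad}\sum_{\tilde n\in Q_2(0)\cap\mathbb{N}^d} e^{-a|\tilde n-\tilde y|^2},
$$
which must be established for arbitrary $\tilde y\in\mathbb{R}^d$.

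Next I would split according to whether $\tilde y\in Q_4(0)$ or $\tilde y\in Q_4^c(0)$. In the first case, Lemma \ref{lem-inside} gives the bound with constant $e^{4ad}$, and since $2^{d-1}\geq 1$ we have $e^{4ad}\leq 2^{d-1}e^{4ad}$, so the desired inequality follows. In the second case, Lemma \ref{lem-out} gives the bound with constant $2^{d-1}e^{(d-1)a/2}$; since $a>0$ and $d\geq 1$ imply $\tfrac{(d-1)a}{2}\leq 4ad$, we again obtain $2^{d-1}e^{(d-1)a/2}\leq 2^{d-1}e^{4ad}$, which is the claim. This covers all $\tilde y\in\mathbb{R}^d$ and hence all $y\in\mathbb{R}^d$, completing the proof.

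There is essentially no serious obstacle here: the only thing to watch is that the two constants coming from Lemma \ref{lem-out} and Lemma \ref{lem-inside} are both dominated by the common constant $2^{d-1}e^{4ad}$, which is the reason that particular (non-optimal) constant is chosen in the statement. The translation step is routine, and the case distinction is exactly the one already built into the two preceding lemmas.
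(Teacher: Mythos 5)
Your proposal is correct and matches the paper's argument: the paper likewise combines Lemma \ref{lem-out} and Lemma \ref{lem-inside} into the unified bound $2^{d-1}e^{4ad}$ for all $y\in\mathbb{R}^d$ and then translates by $x$. Your version merely spells out the constant comparison and the change of variables more explicitly, which the paper leaves implicit.
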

\begin{proof}
As a direct corollary of Lemma \ref{lem-out} and Lemma \ref{lem-inside},  it holds that
\begin{equation*}\label{equ-717-15}
\sup_{z\in Q_2(0)} e^{-a|z-y|^2}\leq  2^{d-1}e^{4ad}\sum_{n\in Q_2(0)\bigcap \mathbb{N}^d} e^{-a|n-y|^2}, \quad y\in \mathbb{R}^d.
\end{equation*}
By changing the variable $y\mapsto x+y$, the desired conclusion follows at once.
\end{proof}

\begin{theorem}\label{thm-around}
Assume that $u_0\geq 0$. Then for all solutions of \eqref{heat} and all $t>0$ and $k\in \mathbb{N}^d$
\begin{align}\label{equ-11-25-1}
u(t,x)\leq 2^{d-1}e^{\frac{d}{t}}\sum_{n\in Q_2(k)\bigcap \mathbb{N}^d} u(t,n), \quad x\in Q_2(k).
\end{align}
\end{theorem}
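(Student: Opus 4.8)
The plan is to combine the explicit Gaussian representation of the solution with Lemma \ref{lem-around}, using the sign hypothesis $u_0\geq 0$ to push a pointwise estimate on the heat kernel through the integral. It suffices to treat $u_0\geq 0$, since the case $u_0\leq 0$ follows by replacing $u$ with $-u$.

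First I would recall that for $u_0\in L^2(\mathbb{R}^d)$ and $t>0$ the solution of \eqref{heat} is the Gaussian convolution
\[
u(t,x)=(4\pi t)^{-d/2}\int_{\mathbb{R}^d}e^{-\frac{|x-y|^2}{4t}}u_0(y)\,\mathrm dy,
\]
which is a well-defined, continuous, non-negative function of $x$ because the Gaussian lies in $L^2(\mathbb{R}^d)$ and $u_0\geq 0$. Now fix $t>0$, $k\in\mathbb{N}^d$ and $x\in Q_2(k)$, and apply Lemma \ref{lem-around} with $a=\frac{1}{4t}$ and with the cube centered at $k$: for every $y\in\mathbb{R}^d$,
\[
e^{-\frac{|x-y|^2}{4t}}\leq \sup_{z\in Q_2(k)}e^{-\frac{|z-y|^2}{4t}}\leq 2^{d-1}e^{d/t}\sum_{n\in Q_2(k)\bigcap(\mathbb{N}^d+k)}e^{-\frac{|n-y|^2}{4t}}.
\]
Here $4ad=\frac{d}{t}$, and since $k$ has integer coordinates we have $\mathbb{N}^d+k=\mathbb{N}^d$, so the lattice sum is exactly the one appearing in \eqref{equ-11-25-1}.

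To conclude, multiply this inequality by $(4\pi t)^{-d/2}u_0(y)\geq 0$ and integrate over $y\in\mathbb{R}^d$. Since every term is non-negative and the sum is finite, Tonelli's theorem permits exchanging the sum with the integral, and we obtain
\begin{align*}
u(t,x)&\leq 2^{d-1}e^{d/t}\sum_{n\in Q_2(k)\bigcap\mathbb{N}^d}(4\pi t)^{-d/2}\int_{\mathbb{R}^d}e^{-\frac{|n-y|^2}{4t}}u_0(y)\,\mathrm dy\\
&=2^{d-1}e^{d/t}\sum_{n\in Q_2(k)\bigcap\mathbb{N}^d}u(t,n),
\end{align*}
which is \eqref{equ-11-25-1}. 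I do not expect a genuine obstacle here: the combinatorial core has already been handled in Lemma \ref{lem-around}, and the only points demanding care are the bookkeeping choice $a=\frac{1}{4t}$ (so that the factor $e^{4ad}$ becomes $e^{d/t}$), the translation invariance $\mathbb{N}^d+k=\mathbb{N}^d$ that makes the index sets match, and the non-negativity of $u_0$, which is exactly what keeps the inequality in the correct direction after integration against the kernel.
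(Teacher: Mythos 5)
Your approach is the same as the paper's: write $u(t,x)$ as a Gaussian convolution, invoke Lemma~\ref{lem-around} with $a=\tfrac{1}{4t}$, and use $u_0\geq 0$ to push the pointwise kernel bound through the integral. There is, however, one false step in the justification. You claim that since $k$ has integer coordinates, $\mathbb{N}^d+k=\mathbb{N}^d$, so that the index set $Q_2(k)\cap(\mathbb{N}^d+k)$ from Lemma~\ref{lem-around} coincides with the index set $Q_2(k)\cap\mathbb{N}^d$ appearing in \eqref{equ-11-25-1}. This equality is wrong in general: for instance, if $k=(1,0,\dots,0)$, then $(0,\dots,0)\in\mathbb{N}^d$ but $(0,\dots,0)\notin\mathbb{N}^d+k$. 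In fact one only has the inclusion $\mathbb{N}^d+k\subset\mathbb{N}^d$ (because $\mathbb{N}^d$ is closed under addition), hence $Q_2(k)\cap(\mathbb{N}^d+k)\subset Q_2(k)\cap\mathbb{N}^d$.

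Fortunately, the inclusion is all you need: since every summand $e^{-a|n-y|^2}$ is non-negative, replacing the index set $Q_2(k)\cap(\mathbb{N}^d+k)$ by the larger set $Q_2(k)\cap\mathbb{N}^d$ can only increase the sum, so the inequality
\begin{equation*}
e^{-\frac{|x-y|^2}{4t}}\leq 2^{d-1}e^{d/t}\sum_{n\in Q_2(k)\cap\mathbb{N}^d}e^{-\frac{|n-y|^2}{4t}},\qquad x\in Q_2(k),
\end{equation*}
still holds. With that one-line correction your argument is complete and coincides with the paper's proof; the remaining steps (multiplying by $(4\pi t)^{-d/2}u_0(y)\geq 0$ and integrating, exchanging the finite sum with the integral) are fine, though invoking Tonelli is overkill for a sum with at most $3^d$ terms — linearity of the integral suffices.
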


\begin{proof}
Using the heat kernel, the solution of the heat equation can be written as
\begin{equation}\label{equ-717-16}
u(t,x)=\frac{1}{(4\pi t)^{n/2}}\int_{\mathbb{R}^d}e^{-\frac{|x-y|^2}{4t}}u_0(y)\,\mathrm dy, \quad x\in \mathbb{R}^d.
\end{equation}
Applying Lemma \ref{lem-around} with $a=\frac{1}{4t}$, we obtain
\begin{equation}\label{equ-717-17}
e^{-\frac{|x-y|^2}{4t}} \leq 2^{d-1}e^{\frac{d}{t}}\sum_{n\in Q_2(k)\bigcap \mathbb{N}^d} e^{-\frac{|n-y|^2}{4t}}, \quad x\in Q_2(k).
\end{equation}
Since $u_0\geq 0$, the inequality \eqref{equ-11-25-1} follows from \eqref{equ-717-16} and \eqref{equ-717-17}.
\end{proof}

\begin{remark}\label{rem-1}
Notice that Theorem \ref{thm-around} does not follow from the parabolic Harnack inequality. The classical Harnack inequality \cite{Evans} says that, for every $t'>t>0, k\in \mathbb{N}^d$, every non-negative solution of \eqref{heat} satisfies that
\begin{align}\label{equ-11-28-0}
\max_{x\in Q_2(k)}u(t,x)\leq C(d,t,t')\inf_{x\in Q_2(k)} u(t',x).
\end{align}
The condition $t'>t$ is essential here. The time $t'$ cannot be equal to $t$ in \eqref{equ-11-28-0}. To see this, without loss of generality, it suffices to construct a non-negative solution such that the following fails:
\begin{align}\label{equ-11-27-1}
u(t,x_0)\leq 2^{d-1}e^{\frac{d}{t}}u(t,0),
\end{align}
where $x_0=(1,0,\cdots,0)\in \mathbb{R}^d$.

To this end, for every $M>0$, set $u_{0M}(x)=(4\pi)^{-\frac{d-1}{2}}\chi_{\{M\leq x_1\leq M+1\}}e^{-\frac{|x'|^2}{4}}$, where $\chi_{\{M\leq x_1\leq M+1\}}$ is the characteristic function of the interval $[M,M+1]$, $x=(x_1,x')\in \mathbb{R}^d$. Clearly, $u_{0M}$ is uniformly bounded in $L^2(\mathbb{R}^d)$. Using the heat kernel, we find the solution of the heat equation \eqref{heat} with initial datum $u_{0M}$ is given by
\begin{equation*}\label{equ-11-28-1}
u_M(t,x) = (4\pi(t+1))^{-\frac{d-1}{2}}e^{-\frac{|x'|^2}{4(t+1)}} (4\pi t)^{-1/2}\int_M^{M+1} e^{-\frac{|x_1-y_1|^2}{4t}}\,\mathrm dy_1, 
\end{equation*}
for all $t>0, x\in \mathbb{R}^d$.
By some computations, we have for $t>0$
\begin{align}\label{equ-11-28-2}
u_M(t,x_0) &= (4\pi(t+1))^{-\frac{d-1}{2}} (4\pi t)^{-\frac{1}{2}}\int_M^{M+1} e^{-\frac{|1-y_1|^2}{4t}}\,\mathrm dy_1\nonumber\\
&\geq\frac{1}{2}(4\pi(t+1))^{-\frac{d-1}{2}}(4\pi t)^{-\frac{1}{2}}e^{-\frac{|\frac{1}{2}-M|^2}{4t}},
\end{align}
\begin{align}\label{equ-11-28-3}
u_M(t,0) &= (4\pi(t+1))^{-\frac{d-1}{2}} (4\pi t)^{-\frac{1}{2}}\int_M^{M+1} e^{-\frac{|y_1|^2}{4t}}\,\mathrm dy_1\nonumber\\
&\leq (4\pi(t+1))^{-\frac{d-1}{2}} (4\pi t)^{-\frac{1}{2}}e^{-\frac{M^2}{4t}}.
\end{align}
Combining \eqref{equ-11-28-2} and \eqref{equ-11-28-3} gives that
\begin{align}\label{equ-11-28-4}
u_M(t,x_0) \geq \frac{1}{2}u_M(t,0)e^{\frac{M-\frac{1}{4}}{4t}},   \quad t>0.
\end{align}
When $M$ is large enough, \eqref{equ-11-28-4} obviously contradicts with \eqref{equ-11-27-1}.
\end{remark}

\begin{proof}[\textbf{Proof of (ii) of Theorem \ref{thm-ob}.}]
Without loss of generality, we can assume that $u_0\geq 0$. Arbitrarily fix $k\in \mathbb{N}^d$.
The number of points of the set $Q_2(k)\bigcap \mathbb{N}^d$ is $3^d$. Using the elementary inequality
$\left(\sum_{1\leq i\leq m}a_i\right)^2\leq m\sum_{1\leq i\leq m}a_i^2$ with $m=3^d$, we deduce from \eqref{equ-11-25-1} that
\begin{align}\label{equ-718-1}
u^2(t,x)\leq {12}^de^{\frac{2d}{t}}\sum_{n\in Q_2(k)\bigcap \mathbb{N}^d} u^2(t,n), \quad x\in Q_2(k).
\end{align}
Integrating \eqref{equ-718-1} on $x\in Q_2(k)$, noting that the volume of $Q_2(k)$ is $2^d$, we get
\begin{align}\label{equ-718-2}
\int_{Q_2(k)}u^2(t,x)\,\mathrm dx\leq {24}^de^{\frac{2d}{t}}\sum_{n\in Q_2(k)\bigcap \mathbb{N}^d} u^2(t,n).
\end{align}
Finally, summarizing \eqref{equ-718-2} for $k\in \mathbb{N}^d$  we deduce that
\begin{multline*}
\int_{\mathbb{R}^d}u^2(t,x)\,\mathrm dx = 2^{-d} \sum_{k\in \mathbb{N}^d} \int_{Q_2(k)}u^2(t,x)\,\mathrm dx\\
\leq {12}^de^{\frac{2d}{t}}\sum_{k\in \mathbb{N}^d}\sum_{n\in Q_2(k)\bigcap \mathbb{N}^d} u^2(t,n) \leq 36^d e^{\frac{2d}{t}}\sum_{n\in \mathbb{N}^d}u^2(t,n).
\end{multline*}
This proves the theorem.
\end{proof}

\begin{proof}[\textbf{Proof of (i) of Theorem \ref{thm-ob}.} ]For every $T>0$ and $N>0$, it suffices to show that  there exists an initial datum $u_0\in L^2(\mathbb{R}^d)$ such that the following inequality fails
\begin{align}\label{equ-eps}
\int_{\mathbb{R}^d}|u(T,x)|^2\,\mathrm dx\leq  N^{-d}\sum_{n\in \mathbb{Z}^d} \big|u(T,\frac{n}{N})\big|^2.
\end{align}

We first consider  the case that $d=1$. Define an initial datum $u_0$ via Fourier transform\footnote{Here the Fourier transform is defined as $\mathcal {F}f = \widehat{f}(\xi) = \int_{\mathbb{R}^d}e^{-ix\cdot\xi}f(x)\,\mathrm dx$, and the inverse Fourier transform is $\mathcal {F}^{-1}f (x)= (2\pi)^{-d}\int_{\mathbb{R}^d}e^{ix\cdot\xi}f(\xi)\,\mathrm d \xi$.}
$$
\widehat{u_0}(\xi) = \frac{\pi}{i} e^{T\xi^2}\Big(\widehat{f}(\xi-N\pi)-\widehat{f}(\xi+N\pi)\Big),
$$
where $\widehat{f}(\xi)=e^{-(T+1)\xi^2}$, $\xi\in \mathbb{R}$. Clearly, 
$$
e^{T\xi^2}\widehat{f}(\xi-N\pi)=e^{-\xi^2+2(T+1)N\pi\xi-(T+1)(N\pi)^2}
$$
belongs to $L^2(\mathbb{R})$. So does $e^{T\xi^2}\widehat{f}(\xi+N\pi)$. Thus, $\|u_0\|_{L^2(\mathbb{R})}=\|\widehat{u_0}\|_{L^2(\mathbb{R})}<\infty.$ Since $u(T,x)=(e^{T\triangle}u_0)(x)$, we find
$$
\widehat{u(T,\cdot)}(\xi)=e^{-T\xi^2}\widehat{u_0}(\xi)=\frac{\pi}{i}\Big(\widehat{f}(\xi-N\pi)-\widehat{f}(\xi+N\pi)\Big).
$$
Taking the inverse Fourier transform we obtain
$$
u(T,x)=\frac{1}{2i}(e^{iN\pi x }-e^{-iN\pi x })f(x)=\sin (N\pi x) f(x).
$$
Since $f$ is a bounded smooth function, we find $u(T,\frac{n}{N})=0$ for $n\in \mathbb{Z}$. However, it is clear that $\|u(T,\cdot)\|_{L^2(\mathbb R)}>0$. This leads to a contradiction with \eqref{equ-eps} in one dimension.

In higher dimensions, set
$$
\widehat{u_0}(\xi) = \frac{(2\pi)^d}{2i} e^{T|\xi|^2}\prod_{i=1}^d\Big(\widehat{f}(\xi_i-N\pi)-\widehat{f}(\xi_i+N\pi)\Big)
$$
with $\widehat{f}(\xi_i)=e^{-(T+1)\xi_i^2}$, $\xi_i\in \mathbb{R}$. Similar to the analysis above, we find
$$
u(T,\frac{n}{N})=0, n\in \mathbb{Z}^d,    \quad \textmd{but }  \|u(T,\cdot)\|_{L^2(\mathbb R^d)}>0.
$$
This completes the proof.
\end{proof}

\begin{definition}
We say that a function $V(\cdot)$ in $L_{loc}^1(\mathbb{R}^d)$ satisfies two-side Gaussian type heat kernel estimates, if the operator $\Delta+V$ generates an analytic semigroup $e^{t(\Delta+V(x))}$ in $L^2(\mathbb{R}^d)$,  and if there exist positive constants
$c_i$ $(i=1,2,3,4)$, with  $c_2\leq c_4$, so that for all $t>0, x,y\in \mathbb{R}^d$
$$
t^{-\frac{d}{2}}e^{-c_1(t+1)}e^{-\frac{|x-y|^2}{c_2t}}\leq K(t,x,y)\leq t^{-\frac{d}{2}}e^{c_3(t+1)}e^{-\frac{|x-y|^2}{c_4t}}, 
$$
where $K(t,x,y)$ is the kernel of the semigroup $e^{t(\Delta+V)}$, namely,
$$
(e^{t(\Delta+V)} f)(x) = \int_{\mathbb{R}^d} K(t,x,y)f(y)\,\mathrm dy.
$$
\end{definition}

\begin{theorem}\label{thm-2}
Let $V$ be a real-valued function satisfying two-side Gaussian type heat kernel estimates. Assume that $u_0\geq 0$ (or $\leq 0$). Then there exists a positive constant $C=C(d,V)$ such that the following estimate hold for all solutions of \eqref{heat-p}
$$
\int_{\mathbb{R}^d}u^2(t,x)\,\mathrm dx \leq  e^{C(1+t+\frac{1}{t})}\sum_{n\in \mathbb{N}^d}u^2(t,n), \quad t>0.
$$
\end{theorem}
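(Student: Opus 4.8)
The plan is to sandwich $u(t,\cdot)$ between free heat flows of $u_0$ at two (in general different) times and then exploit the $L^2$-contractivity of $e^{s\Delta}$, closing the argument with part (ii) of Theorem~\ref{thm-ob}. As usual we may take $u_0\ge 0$. Let $U(s,x):=(e^{s\Delta}u_0)(x)$ be the solution of \eqref{heat}; it is non-negative and belongs to $L^2(\mathbb{R}^d)$. Using $u_0\ge0$, the two-sided Gaussian bounds on the kernel $K$, and the elementary identities $\int_{\mathbb{R}^d}e^{-|z|^2/(ct)}\,\mathrm dz=(\pi ct)^{d/2}$ and $U(ct/4,x)=(\pi ct)^{-d/2}\int_{\mathbb{R}^d}e^{-|x-y|^2/(ct)}u_0(y)\,\mathrm dy$, I obtain, for every $t>0$, $x\in\mathbb{R}^d$, $n\in\mathbb{N}^d$,
\begin{align*}
u(t,x)&\le t^{-d/2}e^{c_3(t+1)}\int_{\mathbb{R}^d}e^{-|x-y|^2/(c_4 t)}u_0(y)\,\mathrm dy=e^{c_3(t+1)}(\pi c_4)^{d/2}\,U\big(\tfrac{c_4 t}{4},x\big),\\
u(t,n)&\ge t^{-d/2}e^{-c_1(t+1)}\int_{\mathbb{R}^d}e^{-|n-y|^2/(c_2 t)}u_0(y)\,\mathrm dy=e^{-c_1(t+1)}(\pi c_2)^{d/2}\,U\big(\tfrac{c_2 t}{4},n\big).
\end{align*}

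Squaring the first inequality (both sides are non-negative) and integrating in $x$ gives $\int_{\mathbb{R}^d}u^2(t,x)\,\mathrm dx\le e^{2c_3(t+1)}(\pi c_4)^d\,\|U(\tfrac{c_4t}{4},\cdot)\|_{L^2}^2$. Now comes the one step that really needs care, and the only place the normalization $c_2\le c_4$ is used: since $\|e^{s\Delta}u_0\|_{L^2}^2=(2\pi)^{-d}\int_{\mathbb{R}^d}e^{-2s|\xi|^2}|\widehat{u_0}(\xi)|^2\,\mathrm d\xi$ is non-increasing in $s$ and $\tfrac{c_4t}{4}\ge\tfrac{c_2t}{4}$, we get $\|U(\tfrac{c_4t}{4},\cdot)\|_{L^2}^2\le\|U(\tfrac{c_2t}{4},\cdot)\|_{L^2}^2$. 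To this last quantity I apply part (ii) of Theorem~\ref{thm-ob} at time $\tfrac{c_2t}{4}$ — legitimate because $U$ solves \eqref{heat} with non-negative initial datum $u_0$ — to conclude $\|U(\tfrac{c_2t}{4},\cdot)\|_{L^2}^2\le 36^d e^{8d/(c_2t)}\sum_{n\in\mathbb{N}^d}U^2(\tfrac{c_2t}{4},n)$.

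Finally, the second pointwise comparison rearranges to $U^2(\tfrac{c_2t}{4},n)\le e^{2c_1(t+1)}(\pi c_2)^{-d}u^2(t,n)$, so chaining the three estimates yields
$$
\int_{\mathbb{R}^d}u^2(t,x)\,\mathrm dx\le\Big(\frac{36\,c_4}{c_2}\Big)^{d}e^{2(c_1+c_3)(t+1)+8d/(c_2t)}\sum_{n\in\mathbb{N}^d}u^2(t,n),
$$
and the exponent is at most $C(1+t+\tfrac1t)$ with $C:=\max\{d\ln(36c_4/c_2)+2(c_1+c_3),\ 8d/c_2\}$, which depends only on $d$ and the kernel constants $c_1,\dots,c_4$, i.e.\ on $d$ and $V$. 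I expect the two kernel comparisons and the constant bookkeeping to be completely routine; the genuinely necessary idea is the contractivity step, which sidesteps the fact that the naive pointwise bound $K(t,x,y)\le C\sum_{n\in\mathbb{N}^d}K(t,n,y)$ is \emph{false} when $c_2<c_4$ (for $|y|$ large, $e^{-|x-y|^2/(c_4t)}$ decays strictly more slowly than every $e^{-|n-y|^2/(c_2t)}$, so no $y$-independent constant can exist).
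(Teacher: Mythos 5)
Your proof is correct, and the overall structure (use both Gaussian kernel bounds to sandwich $u(t,\cdot)$ between free heat flows $e^{c_4 t\Delta/4}u_0$ and $e^{c_2 t\Delta/4}u_0$, then invoke part (ii) of Theorem~\ref{thm-ob}) is the same as the paper's. The difference is in how the two of you bridge the mismatch between the time $c_4t/4$ appearing in the upper bound and the time $c_2t/4$ appearing in the lower bound. The paper applies Theorem~\ref{thm-ob} at time $c_4 t/4$, then replaces $t$ by $c_2 t/c_4$ so that the lattice sum involves $e^{c_2 t\Delta/4}u_0$, and finally closes the gap between $\int u^2(t,\cdot)$ and $\int u^2(\tfrac{c_2}{c_4}t,\cdot)$ by bounding the $L^2\!\to\!L^2$ operator norm of $e^{s(\Delta+V)}$ via the Gaussian upper bound on $K$. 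You instead keep $t$ fixed, apply Theorem~\ref{thm-ob} directly at time $c_2 t/4$, and bridge the two times by the elementary $L^2$-monotonicity $\|e^{s\Delta}u_0\|_{L^2}\le\|e^{s'\Delta}u_0\|_{L^2}$ for $s\ge s'\ge 0$. Your variant is cleaner (it avoids the extra growth factor from the perturbed semigroup's operator norm) and it makes transparent exactly where the normalization $c_2\le c_4$ enters; your closing remark, that the naive pointwise inequality $K(t,x,y)\le C\sum_n K(t,n,y)$ cannot hold uniformly in $y$ when $c_2<c_4$, is correct and explains why some such time-shifting or contractivity step is genuinely needed.
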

\begin{proof} We only consider the case that $u_0\geq 0$.
Since both $V$ and  $u_0$ are real-valued, the solution $u$ of \eqref{heat-p} is also real-valued. According to the definition of the kernel $K(t,x,y)$, we have
\begin{equation*}\label{equ-11-29-1}
u(t,x) = \int_{\mathbb{R}^d} K(t,x,y)u_0(y)\,\mathrm dy.
\end{equation*}
Since $V$ satisfies two-side Gaussian type heat kernel estimates, we find that for all $t>0$ and $x\in \mathbb{R}^d$
\begin{align}\label{equ-11-29-2}
0\leq u(t,x) &\leq \int_{\mathbb{R}^d} t^{-\frac{d}{2}}e^{c_3(t+1)}e^{-\frac{|x-y|^2}{c_4t}}u_0(y)\,\mathrm dy\nonumber\\
&=(c_4\pi)^{\frac{d}{2}}e^{c_3(t+1)} (e^{\frac{c_4}{4}t\Delta}u_0)(x).
\end{align}
We apply Theorem \ref{thm-ob} with $t$ replaced by $\frac{c_4}{4}t$ to obtain that
\begin{align}\label{equ-11-29-3}
\int_{\mathbb{R}^d}((e^{\frac{c_4}{4}t\Delta}u_0)(x))^2\,\mathrm dx \leq 36^d e^{\frac{8d}{c_4t}}\sum_{n\in \mathbb{N}^d}((e^{\frac{c_4}{4}t\Delta}u_0)(n))^2.
\end{align}
Combining \eqref{equ-11-29-2} and \eqref{equ-11-29-3} gives that for all $t>0$
\begin{multline}\label{equ-11-29-4}
\int_{\mathbb{R}^d}u^2(t,x)\,\mathrm dx\\ \leq 36^d(c_4\pi)^de^{2c_3(t+1)} e^{\frac{8d}{c_4t}}\sum_{n\in \mathbb{N}^d}((e^{\frac{c_4}{4}t\Delta}u_0)(n))^2.
\end{multline}
Replacing $t$ by $c_2t/c_4$ in \eqref{equ-11-29-4} gives that for all $t>0$
\begin{multline}\label{equ-11-29-4.5}
\int_{\mathbb{R}^d}u^2(\frac{c_2}{c_4}t,x)\,\mathrm dx \\\leq 36^d(c_4\pi)^de^{2c_3(\frac{c_2}{c_4}t+1)} e^{\frac{8d}{c_2t}}\sum_{n\in \mathbb{N}^d}((e^{\frac{c_2}{4}t\Delta}u_0)(n))^2.
\end{multline}

On the other hand, using the lower bound of the kernel, we find that
\begin{align}\label{equ-11-29-5}
u(t,x) &\geq \int_{\mathbb{R}^d} t^{-\frac{d}{2}}e^{-c_1(t+1)}e^{-\frac{|x-y|^2}{c_2t}}u_0(y)\,\mathrm dy\nonumber\\
&=(c_2\pi)^{\frac{d}{2}}e^{-c_1(t+1)} (e^{\frac{c_2}{4}t\Delta}u_0)(x).
\end{align}
It follows from \eqref{equ-11-29-5} that for all $t>0$ and $n\in \mathbb{N}^d$
\begin{align}\label{equ-11-29-6}
((e^{\frac{c_2}{4}t\Delta}u_0)(n))^2 \leq (c_2\pi)^{-d}e^{2c_1(t+1)}u^2(t,n).
\end{align}
Inserting \eqref{equ-11-29-6} into \eqref{equ-11-29-4.5} we get
\begin{align}\label{equ-11-29-6.5}
\int_{\mathbb{R}^d}u^2(\frac{c_2}{c_4}t,x)\,\mathrm dx \leq (\frac{36 c_4}{c_2})^de^{2c_3(\frac{c_2}{c_4}t+1)}e^{2c_1(t+1)} e^{\frac{8d}{c_2t}}\sum_{n\in \mathbb{N}^d}u^2(t,n).
\end{align}
Moreover, using the upper bound of $K$ again, we find that for all $t>0$
\begin{align}\label{equ-11-29-7}
\|e^{t(\Delta+V)}\|_{L^2(\mathbb{R}^d), L^2(\mathbb{R}^d)} &\leq \|t^{-\frac{d}{2}}e^{c_3(t+1)}e^{-\frac{|x|^2}{c_4t}}\|_{L^1(\mathbb{R}^d)}\nonumber\\
&=(c_4\pi)^{d/2}e^{c_3(t+1)}.
\end{align}
Noting that $c_2/c_4\leq 1$, it follows from \eqref{equ-11-29-7} (since $u(t,x)=(e^{t(\Delta+V)}u_0)(x)$) that
\begin{align}\label{equ-11-29-8}
\int_{\mathbb{R}^d}u^2(t,x)\,\mathrm dx \leq (c_4\pi)^{d}e^{2c_3(2-\frac{c_2}{c_4})}\int_{\mathbb{R}^d}u^2(\frac{c_2}{c_4}t,x)\,\mathrm dx.
\end{align}
Finally, combining \eqref{equ-11-29-6.5} and \eqref{equ-11-29-8} gives the desired conclusion.
\end{proof}

\begin{proof}[\textbf{Proof of Theorem \ref{thm-ob-p}.}]
Since $V\in L^p_{U, loc}(\mathbb{R}^d)$ with $p> \max\{1, \frac{d}{2}\}$, it is easy to check (see also \cite[Proposition 2.1]{Zheng}) that $V$ belongs to the Kato class $K_d$ (see \cite[p. 453]{Simon} for a precise definition). According to \cite[Theorem 7.1]{Simon}, for all $\varepsilon>0$, there exist positive constants $C_1(\varepsilon), C_2(\varepsilon)$ so that the kernel $K(t,x,y)$ of the analytic semigroup $e^{t(\Delta+V)}$ satisfies
$$
t^{-\frac{d}{2}}e^{-C_1(\varepsilon)(1+t)}e^{-\frac{|x-y|^2}{4(1-\varepsilon)t}}\leq K(t,x,y)\leq t^{-\frac{d}{2}}e^{C_2(\varepsilon)(1+t)}e^{-\frac{|x-y|^2}{4(1+\varepsilon)t}}
$$
for all $t>0, x,y\in \mathbb{R}^d.$
Thus $V$ satisfies a two-side Gaussian type heat kernel estimate. Then Theorem \ref{thm-ob-p} follows from Theorem \ref{thm-2} directly.
\end{proof}

\section{Applications to controllability}
\setcounter{equation}{0}
In this section, we will show an application of Theorem \ref{thm-ob} for an impulsive controllability for the heat equation in $\mathbb R^d$.  We refer the interesting reader to \cite{EV18,Wang17} for the null controllability of the heat equation in $\mathbb R^d$
with distributed controls.

Arbitrarily fix $T>0$ and $\tau\in(0,T)$.
Consider the heat equation with impulsive control
\begin{equation}\label{well}
\left\{
\begin{split}
&\partial_t y(t,x)-\Delta y(t,x)=0\;\;&\text{in}\;\;(0,T)\times\mathbb R^d,\\
&y(\tau,x)=y(\tau^-,x)+Bv\;\;&\text{in}\;\;\mathbb R^d,\\
&y(0)=y_0(x)\;\;&\text{in}\;\;\mathbb R^d.
\end{split}
\right.
\end{equation}
Here, $y$ is the state variable, $y_0\in L^2(\mathbb R^d)$, $y(\tau^-,x)$ denotes the left limit of $y(\cdot,x)$ (treated as
a function from $\mathbb R^+$ to $\mathbb R^d$) for each $x$ at time $\tau$,
and $v\in  \ell^2(\mathbb R^d)$ is the control.
The control operator $B: \ell^2(\mathbb R^d)\rightarrow \mathcal D'(\mathbb R^d)$ is defined by
\begin{equation*}
Bv:=\sum_{n\in\mathbb Z^d}v_n\delta_n\;\;\text{for each}\;\;v=(v_n)_{n\in\mathbb Z^d}\in \ell^2(\mathbb R^d),
\end{equation*}
where $\delta_n(x):=\delta_n(x-n)$, $x\in\mathbb R^d$, with $\delta$ being the Dirac measure.
In fact, it is not hard to check that $B$ is  linear and bounded from $\ell^2(\mathbb R^d)$ to $H^{-s}(\mathbb R^d)$
when $s>d/2$ (see also \cite{Wang18}).

We first quote from \cite{Wang18} the following result concerning the well-posedness  of \eqref{well}.
\begin{proposition}
If $s>d/2$,   $y_0\in L^2(\mathbb R^d)$ and  $v\in  \ell^2(\mathbb R^d)$, then \eqref{well} has a unique
solution in $C([0,\tau)\cup(\tau,T];L^2(\mathbb R^d))\cap C([\tau,T];H^{-s}(\mathbb R^d))$.
\end{proposition}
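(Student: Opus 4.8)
The plan is to solve \eqref{well} separately on the two time intervals $[0,\tau)$ and $[\tau,T]$ by means of the free heat semigroup $\{e^{t\Delta}\}_{t\ge 0}$, and then to glue the two pieces across the impulse at $t=\tau$. On $[0,\tau)$ no control acts, so the (mild) solution is simply $y(t)=e^{t\Delta}y_0$; since $\{e^{t\Delta}\}$ is a $C_0$-semigroup on $L^2(\mathbb R^d)$, this belongs to $C([0,\tau];L^2(\mathbb R^d))$, and in particular the left limit $y(\tau^-,\cdot)=e^{\tau\Delta}y_0$ exists in $L^2(\mathbb R^d)$.

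Next I would use the impulse condition to define the state at time $\tau$, namely $y(\tau,\cdot)=e^{\tau\Delta}y_0+Bv$. Here $e^{\tau\Delta}y_0\in L^2(\mathbb R^d)\hookrightarrow H^{-s}(\mathbb R^d)$, while $Bv\in H^{-s}(\mathbb R^d)$ because, for $s>d/2$, the operator $B$ is bounded from $\ell^2(\mathbb R^d)$ into $H^{-s}(\mathbb R^d)$ (as recalled just before the statement); hence $y(\tau,\cdot)\in H^{-s}(\mathbb R^d)$. On $[\tau,T]$ the solution is then $y(t)=e^{(t-\tau)\Delta}y(\tau,\cdot)=e^{t\Delta}y_0+e^{(t-\tau)\Delta}Bv$. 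Since $\{e^{t\Delta}\}$ is also a $C_0$-semigroup on the Sobolev space $H^{-s}(\mathbb R^d)$, the map $t\mapsto y(t)$ is continuous from $[\tau,T]$ into $H^{-s}(\mathbb R^d)$.

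To recover the $L^2$-regularity for $t>\tau$ I would invoke the parabolic smoothing estimate
\begin{equation*}
\|e^{t\Delta}g\|_{L^2(\mathbb R^d)}\le C\,t^{-s/2}\,\|g\|_{H^{-s}(\mathbb R^d)},\qquad t>0,
\end{equation*}
which follows immediately from the Fourier-multiplier bound $\sup_{\xi\in\mathbb R^d}e^{-t|\xi|^2}(1+|\xi|^2)^{s/2}\le C t^{-s/2}$. Applied with $g=Bv$ and $t-\tau$ in place of $t$, this gives $e^{(t-\tau)\Delta}Bv\in L^2(\mathbb R^d)$ for every $t>\tau$; combining it with the semigroup property (writing $e^{(t-\tau)\Delta}Bv=e^{(t-t_0)\Delta}\big(e^{(t_0-\tau)\Delta}Bv\big)$ for $\tau<t_0<t$ and using strong continuity of $\{e^{t\Delta}\}$ on $L^2$) yields continuity of $t\mapsto y(t)$ from $(\tau,T]$ into $L^2(\mathbb R^d)$. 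Together with the $[0,\tau)$ piece this gives $y\in C([0,\tau)\cup(\tau,T];L^2(\mathbb R^d))$. Uniqueness is then immediate: two solutions coincide on $[0,\tau)$ by uniqueness for the $L^2$-Cauchy problem of the heat equation, hence have the same left limit and the same jump, hence the same value at $\tau$, and therefore coincide on $[\tau,T]$ by uniqueness for the $H^{-s}$-Cauchy problem.

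The only genuinely delicate point is the behaviour across the impulse: the solution lives in $L^2$ on $[0,\tau)$, jumps at $t=\tau$ to an element of $H^{-s}\setminus L^2$ (since $Bv$ is a superposition of Dirac masses), and is then instantaneously smoothed back into $L^2$ for $t>\tau$ by the heat flow. Making this precise—keeping track of which norm controls which piece, and of the fact that continuity into $L^2$ holds on $(\tau,T]$ but not at $\tau$ itself—is where the care is needed; the rest is a routine application of standard semigroup theory. All of this is, of course, contained in \cite{Wang18}.
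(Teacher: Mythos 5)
Your proof is correct. Note first that the paper itself offers no proof of this proposition — it explicitly quotes the result from \cite{Wang18} — so there is no argument in the present paper to compare against. Your semigroup argument (solve on $[0,\tau)$ in $L^2$, add the impulse $Bv\in H^{-s}$ at $t=\tau$ using boundedness of $B:\ell^2\to H^{-s}$ for $s>d/2$, evolve forward in $H^{-s}$, and use parabolic smoothing $e^{t\Delta}:H^{-s}\to L^2$ to recover $L^2$-continuity on $(\tau,T]$) is exactly the natural one, and the uniqueness step by matching at $\tau$ is clean.

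One small inaccuracy to flag: the multiplier bound $\sup_{\xi}e^{-t|\xi|^2}(1+|\xi|^2)^{s/2}\le C\,t^{-s/2}$ cannot hold uniformly for all $t>0$, since at $\xi=0$ the left side equals $1$ while the right side tends to $0$ as $t\to\infty$. The correct statement is $\sup_{\xi}e^{-t|\xi|^2}(1+|\xi|^2)^{s/2}\le C\max\{1,t^{-s/2}\}$, or equivalently the smoothing estimate with a constant depending on the upper time bound $T$. Since you only invoke the estimate on the bounded interval $(\tau,T]$, this does not affect the argument, but the bound as written is false for large $t$ and should be corrected.
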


The main result of this section is stated  as follows.
\begin{theorem}\label{signcon}
Let $0<\tau<T$. For each $y_0\in L^2(\mathbb R^d)$, there is a control $v\in \ell^2(\mathbb R^d)$, with
$$
\|v\|_{\ell^2}\leq Ce^{\frac{C}{T-\tau}}\|y_0\|,
$$
such that the solution of \eqref{well} verifies $y(T,x;v)\geq 0$ for a.e. $x\in \mathbb R^d$.
\end{theorem}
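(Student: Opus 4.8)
The plan is to recast the statement as a reachability problem and then dualize it into an observability inequality on lattice points, which turns out to be exactly (ii) of Theorem \ref{thm-ob}. First I would solve \eqref{well} by the semigroup: on $[0,\tau)$ one has $y(t)=e^{t\Delta}y_0$, hence $y(\tau^-)=e^{\tau\Delta}y_0$, and on $(\tau,T]$, $y(t)=e^{(t-\tau)\Delta}\bigl(y(\tau^-)+Bv\bigr)=e^{t\Delta}y_0+e^{(t-\tau)\Delta}Bv$; in particular $y(T,\cdot;v)=e^{T\Delta}y_0+e^{(T-\tau)\Delta}Bv$. Write $w:=e^{T\Delta}y_0\in L^2(\mathbb R^d)$ and $\Phi v:=e^{(T-\tau)\Delta}Bv$. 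The map $\Phi:\ell^2\to L^2(\mathbb R^d)$ is bounded (the smoothing of $e^{(T-\tau)\Delta}$ turns $Bv\in H^{-s}$, $s>d/2$, into an $L^2$ function; equivalently the Gaussian Gram matrix $\bigl(K(2(T-\tau),n,m)\bigr)_{n,m\in\mathbb Z^d}$ is bounded on $\ell^2$ by Schur's test), and its adjoint is $(\Phi^*h)_n=(e^{(T-\tau)\Delta}h)(n)$, $n\in\mathbb Z^d$. The goal becomes: find $v\in\ell^2$ with $\|v\|_{\ell^2}\le Ce^{C/(T-\tau)}\|y_0\|$ and $w+\Phi v\ge0$ a.e.

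Next comes a Hahn--Banach duality step. For $M>0$ let $\mathcal C=\{f\in L^2(\mathbb R^d):f\ge0\text{ a.e.}\}$ (a closed convex cone) and let $B_M$ be the closed $M$-ball of $\ell^2$. The set $\mathcal C-\Phi(B_M)$ is convex and closed, since $B_M$ is weakly compact, $\Phi$ is weak-to-weak continuous, and $\mathcal C$ is weakly closed. Hence either $w\in\mathcal C-\Phi(B_M)$, which yields the desired control with $\|v\|_{\ell^2}\le M$, or $w$ is strictly separated from this set; writing out the separating inequality and using that the dual cone of $\mathcal C$ is again the nonnegative functions, the second alternative forces the existence of $h\ge0$ with $\langle w,h\rangle<-M\|\Phi^*h\|_{\ell^2}$. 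Therefore it suffices to establish
\[
|\langle w,h\rangle|\le M\,\|\Phi^*h\|_{\ell^2}\qquad\text{for all }h\in L^2(\mathbb R^d),\ h\ge0,
\]
for a suitable $M=Ce^{C/(T-\tau)}\|y_0\|$ with $C=C(d)$.

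To prove this inequality, I would use that $e^{\tau\Delta}$ is a contraction on $L^2$: from $\langle w,h\rangle=\langle y_0,e^{\tau\Delta}e^{(T-\tau)\Delta}h\rangle$ we get $|\langle w,h\rangle|\le\|y_0\|\,\|e^{(T-\tau)\Delta}h\|_{L^2}$. Since $h\ge0$, (ii) of Theorem \ref{thm-ob} applied with $t=T-\tau$ gives $\|e^{(T-\tau)\Delta}h\|_{L^2}^2\le36^d e^{2d/(T-\tau)}\sum_{n\in\mathbb N^d}|(e^{(T-\tau)\Delta}h)(n)|^2\le36^d e^{2d/(T-\tau)}\|\Phi^*h\|_{\ell^2}^2$, using $\mathbb N^d\subset\mathbb Z^d$. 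Combining, $|\langle w,h\rangle|\le6^d e^{d/(T-\tau)}\|y_0\|\,\|\Phi^*h\|_{\ell^2}$, so $M=6^d e^{d/(T-\tau)}\|y_0\|$ works, which is of the claimed form.

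The genuinely new ingredient is (ii) of Theorem \ref{thm-ob}; everything else is soft. The step needing the most care is the duality packaging: checking $y(T,\cdot;v)\in L^2$, the boundedness and the adjoint formula for $\Phi$, and above all the closedness of $\mathcal C-\Phi(B_M)$ so that the separation argument is legitimate (this is where weak compactness of $B_M$ and weak closedness of $\mathcal C$ are used). It is worth emphasizing the structural reason the method works: the cone constraint in the separation forces the separating functional $h$ to be sign-definite, which is precisely the hypothesis under which (ii) of Theorem \ref{thm-ob} holds — an observability inequality valid for \emph{all} data (false on lattice points) is never needed.
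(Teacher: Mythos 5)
Your proof is correct, and it takes a genuinely different route from the paper's. The paper follows a penalized variational (Lions-type) scheme: for each $\varepsilon>0$ it minimizes the functional $F^{T,\tau}_\varepsilon(\varphi_T)=\tfrac12\|B^*\varphi(\tau)\|_{\ell^2}^2+\langle y_0,\varphi(0)\rangle+\varepsilon\|\varphi_T\|$ over the cone $L^2_+(\mathbb R^d)$, uses the observability inequality (ii) of Theorem~\ref{thm-ob} to establish coercivity (Lemma~\ref{du6}) and a uniform a priori bound on $B^*\widehat\varphi^\varepsilon(\tau)$ (Lemma~\ref{du7}), sets $v_\varepsilon:=B^*\widehat\varphi^\varepsilon(\tau)$, derives $\langle y(T;v_\varepsilon),\varphi_T\rangle+\varepsilon\|\varphi_T\|\geq0$ from the Euler--Lagrange inequality, and then passes to the weak limit as $\varepsilon\to0$. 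You instead recast the problem directly as $w+\Phi v\in\mathcal C$ and run a Hahn--Banach strict-separation argument: the set $\mathcal C-\Phi(B_M)$ is closed and convex (weak compactness of $B_M$ plus weak closedness of $\mathcal C$), and if $w$ lay outside it there would exist a separating $h$ which the cone constraint forces to satisfy $h\geq0$; the observability inequality (ii) then yields $|\langle w,h\rangle|\le 6^d e^{d/(T-\tau)}\|y_0\|\,\|\Phi^*h\|_{\ell^2}$, contradicting the separation once $M=6^d e^{d/(T-\tau)}\|y_0\|$. Both arguments exploit exactly the same structural fact you emphasize at the end, namely that the sign constraint forces the dual object ($h$ for you, $\varphi_T$ for the paper) to be nonnegative so that the lattice observability inequality, which is false for general data, applies. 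Your route is shorter and avoids the existence/uniqueness machinery for the minimizer and the $\varepsilon\to0$ limit; the paper's route is more constructive, producing an explicit family of approximate controls $v_\varepsilon$, and is the standard template in the controllability literature (it is attributed to \cite{Y,LG}). One remark: your contraction step $\|e^{(T-\tau)\Delta}h\|$ after factoring off $e^{\tau\Delta}$ parallels the paper's inequality $\|\widehat\varphi^\varepsilon(0)\|\le\|\widehat\varphi^\varepsilon(\tau)\|$ in Lemma~\ref{du7}, and your verification that $\Phi$ is bounded (via the Gaussian Gram matrix and Schur's test) is a point the paper handles by citing \cite{Wang18}; both are sound.
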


Here and in the sequel, we write $\langle\cdot,\cdot\rangle$ and $\|\cdot\|$ for the usual inner product and norm
in $L^2(\mathbb R^d)$, and denote by $\langle\cdot,\cdot\rangle_{\ell^2}$ and $\|\cdot\|_{\ell^2}$ the usual inner product and norm
in $\ell^2(\mathbb R^d)$, respectively.

\begin{remark}
Similarly, for each $y_0\in L^2(\mathbb R^d)$, there exists $v\in \ell^2(\mathbb R^d)$ such that the solution of \eqref{well} satisfies $y(T,x;v)\leq 0$ for a.e. $x\in \mathbb R^d$.
\end{remark}

\begin{remark}
Analogous results can be established for the heat equation with potentials by using Theorem \ref{thm-ob-p} instead.
\end{remark}

We point out that the proof of Theorem \ref{signcon} is motivated and  adapted from the arguments in \cite[Theorem 2.4]{Y} (see also \cite{LG}).
Before giving the proof of Theorem \ref{signcon}, we start with some preliminaries.
First of all, we quote from \cite[Chapter I, Theorem 1.2]{ms} the following classical result in the Calculus of Variations.

\begin{proposition}\label{cv}
Let $X$ be a reflexive Banach space and let $K$ be a weakly closed subset. If a weakly lower semi-continuous
functional $F:K\rightarrow\mathbb R$ satisfies the following coercive condition
$$
\lim_{\substack{z\in K\\ \|z\|_X\rightarrow +\infty}}F(z)=+\infty,
$$
then $F$ attains its minimum in $K$, i.e., there exists at least one $\hat z$ so that $$F(\hat z)=\min_{z\in K}F(z).$$
Moreover, if $F$ is strictly convex in $K$ then it has a unique minimizer.
\end{proposition}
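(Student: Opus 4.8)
The plan is to run the \emph{direct method of the calculus of variations}. If $K=\emptyset$ there is nothing to prove, so assume $K\neq\emptyset$ and put $m:=\inf_{z\in K}F(z)$; since $F$ is real-valued on the nonempty set $K$, we have $m<+\infty$ (at this stage $m=-\infty$ is not yet excluded). Choose a minimizing sequence $\{z_n\}\subset K$ with $F(z_n)\to m$ as $n\to\infty$, and build the minimizer as a weak subsequential limit of $\{z_n\}$.

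The first step is to show $\{z_n\}$ is bounded in $X$. If not, some subsequence $\{z_{n_k}\}$ satisfies $\|z_{n_k}\|_X\to+\infty$, and then the coercivity hypothesis forces $F(z_{n_k})\to+\infty$, contradicting $F(z_{n_k})\to m<+\infty$; hence $\sup_n\|z_n\|_X<\infty$. The second step invokes reflexivity: bounded sequences in a reflexive Banach space are relatively weakly sequentially compact (Eberlein--\v Smulian), so after passing to a subsequence, not relabelled, $z_n\rightharpoonup\hat z$ weakly in $X$ for some $\hat z\in X$. Since $K$ is weakly closed, $\hat z\in K$. Finally, weak lower semicontinuity of $F$ gives
$$
F(\hat z)\leq\liminf_{n\to\infty}F(z_n)=m,
$$
while $\hat z\in K$ forces $F(\hat z)\geq m$; therefore $F(\hat z)=m$. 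In particular $m$ is finite and attained at $\hat z$, which proves the existence part.

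For the uniqueness assertion, note that "strict convexity of $F$ in $K$" presupposes $K$ is convex, so midpoints of points of $K$ remain in $K$. Suppose $\hat z_1,\hat z_2\in K$ with $\hat z_1\neq\hat z_2$ both realize the minimum $m$. Then $\tfrac12(\hat z_1+\hat z_2)\in K$ and strict convexity yields
$$
F\big(\tfrac12(\hat z_1+\hat z_2)\big)<\tfrac12 F(\hat z_1)+\tfrac12 F(\hat z_2)=m,
$$
contradicting the definition of $m$; hence the minimizer is unique.

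The only genuinely substantive ingredient is the functional-analytic fact that bounded sequences in a reflexive space admit weakly convergent subsequences; everything else is bookkeeping with the definitions. Accordingly, the main point to watch is that each hypothesis is used exactly where it is needed: coercivity for boundedness of the minimizing sequence, reflexivity for weak sequential compactness, weak closedness of $K$ to keep the limit admissible, and weak (not merely norm) lower semicontinuity to pass to the limit in $F$.
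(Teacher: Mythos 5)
Your proof is correct and is the standard direct method of the calculus of variations: coercivity gives boundedness of a minimizing sequence, reflexivity gives a weakly convergent subsequence, weak closedness keeps the limit in $K$, weak lower semicontinuity passes to the limit, and strict convexity on the (necessarily convex) set $K$ gives uniqueness via the midpoint argument. The paper does not prove this proposition at all --- it quotes it verbatim from Struwe's book --- so there is nothing to compare against beyond noting that your argument is exactly the classical one the cited reference supplies.
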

\begin{remark}
Notice that closed and convex subsets of Banach spaces are important examples of weakly closed sets.
\end{remark}

In the sequel, we define
\begin{equation*}
L^2_+(\mathbb R^d):=\big\{y\in L^2(\mathbb R^d): y(x)\geq0 \;\;\text{for a.e.}\;\;x\in \mathbb R^d\big\}.
\end{equation*}
Clearly, it is a closed and convex subset of $L^2(\mathbb R^d)$.

For each $T>\tau>0$ and $\varepsilon>0$, we define a functional $F^{T,\tau}_\varepsilon: L^2_+(\mathbb R^d)\rightarrow \mathbb R$ by
\begin{equation*}
F^{T,\tau}_\varepsilon(\varphi_T):=\frac{1}{2}\|B^*\varphi(\tau)\|^2_{\ell^2}+\langle y_0,\varphi(0)\rangle
+\varepsilon \|\varphi_T\|
\end{equation*}
for any $\varphi_T\in L^2_+(\mathbb R^d)$, where $y_0$ is the given initial state of \eqref{well} and $\varphi$ solves the so-called adjoint equation
\begin{equation}\label{adj}
\left\{
\begin{split}
&\partial_t \varphi(t,x)+\Delta \varphi(t,x)=0\;\;&\text{in}\;\;(0,T)\times\mathbb R^d,\\
&\varphi(T,x)=\varphi_T(x)\;\;&\text{in}\;\;\mathbb R^d,
\end{split}
\right.
\end{equation}
and $B^*:C_0^\infty(\mathbb R^d)\rightarrow \ell^2(\mathbb R^d)$ is the adjoint operator of $B$.
It is clear that $B^*$ is linear and bounded from $H^{s}(\mathbb R^d)$ to $\ell^2(\mathbb R^d)$ with $s>d/2$.

\begin{lemma}\label{du6}
Given $T>\tau>0$ and $y_0\in L^2(\mathbb R^d)$, for each $\varepsilon>0$, $F^{T,\tau}_\varepsilon$ has a unique minimizer, denoted by $\widehat \varphi^\varepsilon_T$, in $L^2_+(\mathbb R^d)$. Furthermore, for all $\varphi_T\in L^2_+(\mathbb R^d)$
\begin{equation}\label{du1}
\langle B^*\widehat \varphi^\varepsilon(\tau), B^* \varphi(\tau)\rangle_{\ell^2}+\langle y_0,\varphi(0)\rangle+\varepsilon\|\varphi_T\|\geq0,
\end{equation}
where $\widehat \varphi^\varepsilon$ and $\varphi$ are the solutions of \eqref{adj} with  $\widehat \varphi_T^\varepsilon$ and $\varphi_T$, respectively.
\end{lemma}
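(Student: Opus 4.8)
The plan is to run the direct method of the calculus of variations via Proposition \ref{cv} with $X=L^2(\mathbb R^d)$, $K=L^2_+(\mathbb R^d)$ and $F:=F^{T,\tau}_\varepsilon$, and then to read off \eqref{du1} as the first-order optimality condition on the convex cone $L^2_+(\mathbb R^d)$. First I would record that, since $\varphi$ solves the backward equation \eqref{adj}, one has $\varphi(t)=e^{(T-t)\Delta}\varphi_T$; in particular $\varphi(\tau)=e^{(T-\tau)\Delta}\varphi_T$ and $\varphi(0)=e^{T\Delta}\varphi_T$, and both are non-negative whenever $\varphi_T\geq0$ because the heat semigroup is positivity preserving. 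Since $e^{(T-\tau)\Delta}$ maps $L^2(\mathbb R^d)$ boundedly into $H^s(\mathbb R^d)$ for every $s$ (smoothing effect) and $B^*$ is bounded from $H^s(\mathbb R^d)$ into $\ell^2(\mathbb R^d)$ for $s>d/2$, the map $\varphi_T\mapsto B^*\varphi(\tau)=(\varphi(\tau,n))_{n\in\mathbb Z^d}$ is a bounded linear operator from $L^2(\mathbb R^d)$ to $\ell^2(\mathbb R^d)$. Hence $F$ is the sum of the continuous convex quadratic term $\tfrac12\|B^*\varphi(\tau)\|^2_{\ell^2}$, the weakly continuous linear term $\langle y_0,\varphi(0)\rangle=\langle e^{T\Delta}y_0,\varphi_T\rangle$, and the weakly lower semi-continuous convex term $\varepsilon\|\varphi_T\|$, so $F$ is convex and weakly lower semi-continuous on $L^2_+(\mathbb R^d)$, which is weakly closed because it is closed and convex.

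The crucial and hardest step is the coercivity of $F$ on $L^2_+(\mathbb R^d)$, and this is exactly where Theorem \ref{thm-ob}(ii) enters. I would argue by contradiction: suppose $\varphi_T^k\in L^2_+(\mathbb R^d)$ with $R_k:=\|\varphi_T^k\|\to\infty$ and $F(\varphi_T^k)\leq C_0$. Setting $\psi_T^k:=\varphi_T^k/R_k$ and letting $\psi^k$ solve \eqref{adj} with datum $\psi_T^k$, dividing $F(\varphi_T^k)$ by $R_k$ gives
\[
\frac{R_k}{2}\,\|B^*\psi^k(\tau)\|^2_{\ell^2}+\langle e^{T\Delta}y_0,\psi_T^k\rangle+\varepsilon\leq\frac{C_0}{R_k}.
\]
Since the first summand is non-negative and the second is bounded (by $\|y_0\|$), letting $k\to\infty$ forces $\|B^*\psi^k(\tau)\|_{\ell^2}\to0$, and also $\langle e^{T\Delta}y_0,\psi_T^k\rangle\leq C_0/R_k-\varepsilon$. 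Passing to a subsequence, $\psi_T^k\rightharpoonup\psi_T$ weakly in $L^2(\mathbb R^d)$ with $\psi_T\in L^2_+(\mathbb R^d)$; pairing with the Gaussian $y\mapsto(4\pi(T-\tau))^{-d/2}e^{-|n-y|^2/4(T-\tau)}\in L^2(\mathbb R^d)$ gives $\psi^k(\tau,n)\to(e^{(T-\tau)\Delta}\psi_T)(n)$ for every $n$, hence $(e^{(T-\tau)\Delta}\psi_T)(n)=0$ for all $n\in\mathbb Z^d\supseteq\mathbb N^d$. Applying Theorem \ref{thm-ob}(ii) with $t=T-\tau$ to the non-negative datum $\psi_T$ then yields $\|e^{(T-\tau)\Delta}\psi_T\|^2\leq36^d e^{2d/(T-\tau)}\sum_{n\in\mathbb N^d}|(e^{(T-\tau)\Delta}\psi_T)(n)|^2=0$, so $e^{(T-\tau)\Delta}\psi_T=0$ and therefore $\psi_T=0$ by injectivity of the heat semigroup. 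But along this subsequence $\langle e^{T\Delta}y_0,\psi_T^k\rangle\to\langle e^{T\Delta}y_0,\psi_T\rangle=0$, which is incompatible with $\langle e^{T\Delta}y_0,\psi_T^k\rangle\leq C_0/R_k-\varepsilon\to-\varepsilon<0$. This proves coercivity, so Proposition \ref{cv} produces a minimizer $\widehat\varphi^\varepsilon_T\in L^2_+(\mathbb R^d)$.

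For uniqueness I would combine convexity with the same observability mechanism. If $\varphi_T^{(1)}\neq\varphi_T^{(2)}$ were both minimizers, so is the midpoint $m=\tfrac12(\varphi_T^{(1)}+\varphi_T^{(2)})$, and then equality must hold in the convexity inequality for each summand of $F$. Equality in the quadratic term forces $B^*\varphi^{(1)}(\tau)=B^*\varphi^{(2)}(\tau)$ (via $\|\tfrac{a+b}{2}\|^2=\tfrac12\|a\|^2+\tfrac12\|b\|^2-\tfrac14\|a-b\|^2$ applied to $a=B^*\varphi^{(1)}(\tau)$, $b=B^*\varphi^{(2)}(\tau)$), while equality in the triangle inequality for $\varepsilon\|\cdot\|$ forces $\varphi_T^{(1)}$ and $\varphi_T^{(2)}$ to be non-negatively proportional. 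In the nondegenerate case $\varphi_T^{(1)}=t\,\varphi_T^{(2)}$ with $t\geq0$: if $t=1$ we are done, and if $t\neq1$ then $(t-1)B^*\varphi^{(2)}(\tau)=0$ gives $\varphi^{(2)}(\tau,n)=0$ for all $n$, whence Theorem \ref{thm-ob}(ii) (with $\varphi_T^{(2)}\geq0$) yields $\varphi^{(2)}(\tau)=0$, so $\varphi_T^{(2)}=0$ and then $\varphi_T^{(1)}=0$ as well; the case $\varphi_T^{(2)}=0$ is identical. In all cases $\varphi_T^{(1)}=\varphi_T^{(2)}$.

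Finally, \eqref{du1} is the Euler--Lagrange inequality at $\widehat\varphi^\varepsilon_T$. Since $L^2_+(\mathbb R^d)$ is a convex cone, $\widehat\varphi^\varepsilon_T+s\varphi_T\in L^2_+(\mathbb R^d)$ for all $\varphi_T\in L^2_+(\mathbb R^d)$ and $s\geq0$, so $g(s):=F(\widehat\varphi^\varepsilon_T+s\varphi_T)$ is minimized at $s=0$ on $[0,\infty)$ and hence has non-negative right derivative $g'(0^+)$ there. The quadratic and linear parts are smooth in $s$, contributing $\langle B^*\widehat\varphi^\varepsilon(\tau),B^*\varphi(\tau)\rangle_{\ell^2}$ and $\langle y_0,\varphi(0)\rangle$, while the convex map $s\mapsto\|\widehat\varphi^\varepsilon_T+s\varphi_T\|$ has right derivative at $0$ equal to $\langle\widehat\varphi^\varepsilon_T,\varphi_T\rangle/\|\widehat\varphi^\varepsilon_T\|$ (or $\|\varphi_T\|$ if $\widehat\varphi^\varepsilon_T=0$), which is $\leq\|\varphi_T\|$ by Cauchy--Schwarz. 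Adding these up gives $\langle B^*\widehat\varphi^\varepsilon(\tau),B^*\varphi(\tau)\rangle_{\ell^2}+\langle y_0,\varphi(0)\rangle+\varepsilon\|\varphi_T\|\geq g'(0^+)\geq0$, which is \eqref{du1}. I expect the only genuine obstacle to be the coercivity step: it requires simultaneously exploiting the sign constraint, weak $L^2$ compactness, the smoothing of the heat semigroup (to make sense of and pass to the limit in the lattice values), and the observability inequality of Theorem \ref{thm-ob}(ii), with the term $\langle y_0,\cdot\rangle$ supplying the final contradiction; the remaining verifications are routine.
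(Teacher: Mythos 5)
Your proof is correct and follows essentially the same route as the paper: weak lower semi-continuity plus coercivity (the latter via normalization, weak compactness, smoothing of the heat semigroup, and Theorem~\ref{thm-ob}(ii)) to produce a minimizer through Proposition~\ref{cv}, and then the one-sided Euler--Lagrange condition on the cone $L^2_+(\mathbb R^d)$ together with Cauchy--Schwarz to obtain \eqref{du1}. One small point in your favor: the paper simply asserts that $F^{T,\tau}_\varepsilon$ is strictly convex on $L^2_+(\mathbb R^d)$, whereas this is not immediate (the $L^2$-norm is not strictly convex and the quadratic term need not be either on sign-changing differences); your uniqueness argument, which handles equality in each summand and invokes the observability inequality on a non-negatively proportional pair, supplies the detail the paper glosses over.
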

\begin{proof}
It is not hard to check that $F^{T,\tau}_\varepsilon$ is strictly convex and weakly lower semi-continuous in $L^2_+(\mathbb R^d)$. We next show that  $F^{T,\tau}_\varepsilon$ satisfies the coercive condition, i.e.,
$$
\liminf_{\substack{\varphi_T\in L_+^2(\mathbb R^d)\\ \|\varphi_T\|\rightarrow +\infty}}F^{T,\tau}_\varepsilon(\varphi_T)=+\infty.
$$
To seek a contradiction, we would assume that there was a sequence  $\{\varphi_T^n\}_{n\geq1}\subset L_+^2(\mathbb R^d)$ to be such that
$$\lim_{n\rightarrow+\infty}\|\varphi_T^n\|=+\infty$$
and
$$F^{T,\tau}_\varepsilon(\varphi_T^n)<+\infty\;\;\text{for all}\;\;n\in \mathbb N. $$
Let us set
$$\widetilde{\varphi}_T^n:=\frac{\varphi_T^n}{\|\varphi_T^n\|}\;\;\text{for all}\;\;n\in \mathbb N. $$
Clearly, $\widetilde{\varphi}_T^n\in L^2_+(\mathbb R^d)$.
Then
$$
\frac{F^{T,\tau}_\varepsilon(\varphi_T^n)}{\|\varphi_T^n\|}
=\frac{1}{2}\|\varphi_T^n\|\|B^* \widetilde\varphi^n(\tau)
\|_{\ell^2}^2+\langle y_0,\widetilde\varphi^n(0)\rangle+\varepsilon,
$$
where $\widetilde \varphi^n$ is the solution to
\begin{equation*}
\left\{
\begin{split}
&\partial_t\widetilde \varphi^n+\Delta \widetilde \varphi^n=0\;\;&\text{in}\;\;(0,T)\times\mathbb R^d,\\
&\widetilde \varphi^n(T)=\widetilde \varphi_T^n\;\;&\text{in}\;\;\mathbb R^d.
\end{split}
\right.
\end{equation*}
As $F^{T,\tau}_\varepsilon(\varphi_T^n)$ is uniformly bounded, we have
\begin{equation}\label{25can1}
\lim_{n\rightarrow +\infty}\|B^*\widetilde\varphi^n(\tau)
\|_{\ell^2}=0.
\end{equation}
Because
$$\|\widetilde\varphi_T^n\|=1\quad\text{for all}\;\;n\geq1,$$
there exists $\widetilde\varphi_T\in L^2_+(\mathbb R^d)$ and a subsequence $\{\widetilde\varphi_T^{n_k}\}_{k\geq1}$ such that
$$
\widetilde\varphi_T^{n_k}\rightharpoonup \widetilde\varphi_T\;\;\text{weakly in}\;\;L^2(\mathbb R^d).
$$
Hence,
\begin{equation}\label{11c1}
\widetilde\varphi^{n_k}(s)\rightarrow \widetilde\varphi(s)\;\;\text{in}\;\;L^2(\mathbb R^d)\;\;\text{for each}\;\;s\in[0,T),\end{equation}
where
$\widetilde\varphi$ is the solution to
\begin{equation*}
\left\{
\begin{split}
&\partial_t\widetilde\varphi+\Delta \widetilde\varphi=0\;\;&\text{in}\;\;(0,T)\times\mathbb R^d,\\
&\widetilde\varphi(T)=\widetilde\varphi_T\;\;&\text{in}\;\;\mathbb R^d.
\end{split}
\right.
\end{equation*}
By \eqref{25can1} and \eqref{11c1}, it holds that
$$\|B^*\widetilde\varphi(\tau)\|_{\ell^2}=0.$$
This, along with (ii) in Theorem \ref{thm-ob}, implies that $\widetilde\varphi\equiv0$ in $[0,T]\times\mathbb R^d$.
Consequently,
$$
\liminf_{k\rightarrow+\infty}\frac{F^{T,\tau}_\varepsilon(\varphi_T^{n_k})}{\|\varphi_T^{n_k}\|}
\geq \varepsilon.
$$
This leads to a contradiction with the uniform boundedness of  $F^{T,\tau}_\varepsilon(\varphi_T^{n_k})$ for all $k\geq1$. Hence, the first part of this lemma follows from Proposition \ref{cv} immediately.

For the second part of this lemma, we first note that
\begin{equation}\label{du2}
\lim_{\rho\rightarrow 0}\frac{F^{T,\tau}_\varepsilon(\widehat \varphi_T^\varepsilon+\rho\varphi_T)-F^{T,\tau}_\varepsilon(\widehat \varphi_T^\varepsilon)}{\rho}\geq 0
\end{equation}
for all $\rho>0$ and $\varphi_T\in L^2_+(\mathbb R^d).$
If $\widehat \varphi_T^\varepsilon=0$, then \eqref{du1} is obviously valid by \eqref{du2}.
Otherwise, by the definition of $F^{T,\tau}_\varepsilon$, one can easily derive that
\begin{multline}\label{du3}
\lim_{\rho\rightarrow 0}\frac{F^{T,\tau}_\varepsilon(\widehat \varphi_T^\varepsilon+\rho\varphi_T)-F^{T,\tau}_\varepsilon(\widehat \varphi_T^\varepsilon)}{\rho}\\
= \langle B^*\widehat \varphi^\varepsilon(\tau), B^* \varphi(\tau)\rangle_{\ell^2}+\langle y_0,\varphi(0)\rangle+\varepsilon\langle \frac{\widehat \varphi_T^\varepsilon}{\|\widehat \varphi_T^\varepsilon\|},\varphi_T\rangle
\end{multline}
for any $\varphi_T\in L^2_+(\mathbb R^d)$. Noting  that
$$
\langle \frac{\widehat \varphi_T^\varepsilon}{\|\widehat \varphi_T^\varepsilon\|},\varphi_T\rangle\leq
\|\varphi_T\|,
$$
therefore, \eqref{du1} follows from \eqref{du2} and \eqref{du3}. It completes the proof.
\end{proof}

\begin{lemma}\label{du7}
For each $\varepsilon>0$,
let $\widehat \varphi_T^\varepsilon$ be the minimizer of $F^{T,\tau}_\varepsilon$ in $L^2_+(\mathbb R^d)$, and let $\widehat \varphi^\varepsilon$ be the solution of \eqref{adj} with  $\widehat \varphi_T^\varepsilon$.
Then there exists a positive constant $C$, independent of $\varepsilon$, so that
\begin{equation}\label{du4}
\|B^*\widehat \varphi^\varepsilon(\tau)\|_{\ell^2}\leq Ce^{\frac{C}{T-\tau}}\|y_0\|\quad \text{for all}\quad \varepsilon>0.
\end{equation}
\end{lemma}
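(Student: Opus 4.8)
The plan is to run the classical variational/duality argument, à la \cite[Theorem 2.4]{Y}: compare the minimizer with the trivial competitor $\varphi_T\equiv 0$ to get a first estimate, and then close it using the observability inequality of Theorem \ref{thm-ob}(ii). \textbf{Step 1 (comparison with the zero competitor).} Since $0\in L^2_+(\mathbb R^d)$ and $F^{T,\tau}_\varepsilon(0)=0$, the minimizing property of $\widehat\varphi_T^\varepsilon$ (Lemma \ref{du6}) gives $F^{T,\tau}_\varepsilon(\widehat\varphi_T^\varepsilon)\leq 0$. Discarding the non-negative term $\varepsilon\|\widehat\varphi_T^\varepsilon\|$ and using Cauchy--Schwarz in $L^2(\mathbb R^d)$ yields
\begin{equation*}
\tfrac12\|B^*\widehat\varphi^\varepsilon(\tau)\|_{\ell^2}^2\leq-\langle y_0,\widehat\varphi^\varepsilon(0)\rangle\leq\|y_0\|\,\|\widehat\varphi^\varepsilon(0)\|.
\end{equation*}
So it remains to bound $\|\widehat\varphi^\varepsilon(0)\|$ by $\|B^*\widehat\varphi^\varepsilon(\tau)\|_{\ell^2}$ with a constant of the claimed form $Ce^{C/(T-\tau)}$.

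\textbf{Step 2 (from $\widehat\varphi^\varepsilon(0)$ to the lattice data via Theorem \ref{thm-ob}(ii)).} The adjoint equation \eqref{adj} integrates as $\widehat\varphi^\varepsilon(t)=e^{(T-t)\Delta}\widehat\varphi_T^\varepsilon$. Since $\widehat\varphi_T^\varepsilon\in L^2_+(\mathbb R^d)$ and $e^{s\Delta}$ is positivity preserving and an $L^2$-contraction, we have $\widehat\varphi^\varepsilon(\tau)=e^{(T-\tau)\Delta}\widehat\varphi_T^\varepsilon\geq 0$ and, writing $e^{T\Delta}=e^{\tau\Delta}e^{(T-\tau)\Delta}$,
\begin{equation*}
\|\widehat\varphi^\varepsilon(0)\|=\|e^{\tau\Delta}\widehat\varphi^\varepsilon(\tau)\|\leq\|\widehat\varphi^\varepsilon(\tau)\|.
\end{equation*}
Now $\widehat\varphi^\varepsilon(\tau)$ is precisely the value at time $T-\tau>0$ of the solution of \eqref{heat} with the non-negative initial datum $\widehat\varphi_T^\varepsilon$; it is smooth, so its pointwise values on the lattice make sense and, since $B^*$ is evaluation on the integer lattice, $\|B^*\widehat\varphi^\varepsilon(\tau)\|_{\ell^2}^2=\sum_{n\in\mathbb N^d}|\widehat\varphi^\varepsilon(\tau,n)|^2$. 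Applying (ii) of Theorem \ref{thm-ob} with $t=T-\tau$ gives
\begin{equation*}
\|\widehat\varphi^\varepsilon(\tau)\|^2\leq 36^d e^{\frac{2d}{T-\tau}}\sum_{n\in\mathbb N^d}|\widehat\varphi^\varepsilon(\tau,n)|^2=36^d e^{\frac{2d}{T-\tau}}\|B^*\widehat\varphi^\varepsilon(\tau)\|_{\ell^2}^2.
\end{equation*}

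\textbf{Step 3 (conclusion).} Combining the two steps, $\|\widehat\varphi^\varepsilon(0)\|\leq\|\widehat\varphi^\varepsilon(\tau)\|\leq 6^d e^{\frac{d}{T-\tau}}\|B^*\widehat\varphi^\varepsilon(\tau)\|_{\ell^2}$, so Step 1 becomes
\begin{equation*}
\tfrac12\|B^*\widehat\varphi^\varepsilon(\tau)\|_{\ell^2}^2\leq 6^d e^{\frac{d}{T-\tau}}\|y_0\|\,\|B^*\widehat\varphi^\varepsilon(\tau)\|_{\ell^2}.
\end{equation*}
If $\|B^*\widehat\varphi^\varepsilon(\tau)\|_{\ell^2}=0$ then \eqref{du4} is trivial; otherwise dividing gives $\|B^*\widehat\varphi^\varepsilon(\tau)\|_{\ell^2}\leq 2\cdot 6^d e^{\frac{d}{T-\tau}}\|y_0\|$, which is \eqref{du4} on taking $C=C(d)$ large enough (e.g. $C=\max\{2\cdot 6^d,d\}$). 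The constant manifestly does not depend on $\varepsilon$.

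The argument is short, and I expect no deep obstacle; the only care needed is bookkeeping. First, one must identify $B^*$ with point evaluation on the lattice so that $\|B^*\widehat\varphi^\varepsilon(\tau)\|_{\ell^2}^2$ coincides with the sum in Theorem \ref{thm-ob}(ii), and note that $\widehat\varphi^\varepsilon(\tau)=e^{(T-\tau)\Delta}\widehat\varphi_T^\varepsilon$ is smooth (heat smoothing at positive time) so that these pointwise values are legitimate. Second, one must check $\widehat\varphi^\varepsilon(\tau)\geq 0$, which is immediate from $\widehat\varphi_T^\varepsilon\in L^2_+(\mathbb R^d)$ and the positivity preservation of $e^{s\Delta}$, so that Theorem \ref{thm-ob}(ii) applies. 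Everything else is the comparison with the zero competitor, Cauchy--Schwarz, and contractivity of $e^{s\Delta}$ on $L^2$.
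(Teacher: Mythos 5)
Your proposal is correct and follows essentially the same route as the paper: compare the minimizer with the zero competitor to get $\|B^*\widehat\varphi^\varepsilon(\tau)\|_{\ell^2}^2\leq 2\|y_0\|\,\|\widehat\varphi^\varepsilon(\tau)\|$ (using $L^2$-contractivity of $e^{\tau\Delta}$), then close with the observability inequality of Theorem \ref{thm-ob}(ii) applied to the non-negative solution $\widehat\varphi^\varepsilon(\tau)=e^{(T-\tau)\Delta}\widehat\varphi_T^\varepsilon$. The only difference is that you make explicit the bookkeeping (positivity preservation, smoothing, and the identification of $B^*$ with lattice evaluation) that the paper leaves tacit.
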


\begin{proof}
Since $F^{T,\tau}_\varepsilon(\widehat \varphi_T^\varepsilon)\leq F^{T,\tau}_\varepsilon(0)=0$, we see that for any $\varepsilon>0$
\begin{equation}\label{du5}
\|B^*\widehat \varphi^\varepsilon(\tau)\|_{\ell^2}^2\leq 2\|y_0\|\|\widehat \varphi^\varepsilon(0)\|\leq 2\|y_0\|\|\widehat \varphi^\varepsilon(\tau)\|.
\end{equation}
Thanks to (ii) of Theorem \ref{thm-ob},  we have
\begin{equation*}
\|\widehat \varphi^\varepsilon(\tau)\| \leq Ce^{\frac{C}{T-\tau}}\|B^*\widehat\varphi^\varepsilon(\tau)\|_{\ell^2} \quad
\text{for all}\;\; \varepsilon>0,
\end{equation*}
with a positive constant $C$ independent of $\varepsilon$. This, together with \eqref{du5}, implies \eqref{du4} at once.
\end{proof}

\begin{proof}[\textbf{Proof of Theorem \ref{signcon}}]
Arbitrarily  fix $\varphi_T\in L_+^2(\mathbb R^d)$. Let $\varphi$ be any solution of \eqref{adj} with $\varphi(T)=\varphi_T$.  Multiplying the equation \eqref{well} by $\varphi$
and then integrating the resulting  over $[0,T]\times\mathbb R^d$, we obtain that for any $v\in \ell^2(\mathbb R^d)$
\begin{equation}\label{di}
\langle y(T;v),\varphi_T\rangle-\langle y_0,\varphi(0)\rangle=\langle v,B^*\varphi(\tau)\rangle_{\ell^2} .
\end{equation}

Then, for any $\varepsilon>0$,
we let $\widehat \varphi_T^\varepsilon$ be the minimizer of $F^{T,\tau}_\varepsilon$ and let $\widehat \varphi^\varepsilon$ be the solution of \eqref{adj} with $\varphi(T)=\widehat \varphi_T^\varepsilon$.  Now, by setting
$$v_\varepsilon:=B^*\widehat\varphi_\varepsilon(\tau),\;\;\varepsilon>0,$$
 in the above identity \eqref{di},  we get
\begin{equation*}
\langle y(T;v_\varepsilon),\varphi_T\rangle-\langle y_0,\varphi(0)\rangle=\langle B^*\widehat\varphi_\varepsilon(\tau),B^*\varphi(\tau)\rangle_{\ell^2}.
\end{equation*}
This, combined with Lemma \ref{du6}, indicates that
\begin{equation}\label{du8}
\langle y(T;v_\varepsilon),\varphi_T\rangle+\varepsilon\|\varphi_T\|\geq0\quad \text{for any}\quad \varepsilon>0.
\end{equation}

Finally, by Lemma \ref{du7}, we see that $v_\varepsilon$ is uniformly bounded in $\ell^2(\mathbb R^d)$, and thus there exists $\hat v\in \ell^2(\mathbb R^d)$ satisfying
$$\|\hat v\|_{\ell^2}\leq Ce^{\frac{C}{T-\tau}}\|y_0\|,$$
where the constant $C>0$ is independent of $\varepsilon$, such that (up to a subsequence)
$$
v_\varepsilon \rightharpoonup \hat v \quad\text{weakly in}\quad \ell^2(\mathbb R^d)\quad\text{as}\;\;\varepsilon\rightarrow 0.
$$
Hence, by letting $\varepsilon$ goes to zero in \eqref{du8}, we at once obtain that
$$
\langle y(T;\hat v),\varphi_T\rangle \geq0.
$$
This completes the proof because of the arbitrariness of $\varphi_T$ in $L^2_+(\mathbb R^d)$.
\end{proof}

\section*{References}

\end{document}